\newcommand{\AT}{ArtinTate2009}
\newcommand{\Gras}{Gras2003}
\newcommand{\HA}{Hasse2002}
\newcommand{\KA}{Kaplansky1954}
\newcommand{\KU}{Kubota1957}
\newcommand{\MA}{MacLane1995}
\newcommand{\NSW}{NeukirchSchmidtEtAl2008}
\newcommand{\NeuOne}{Neukirch1969}
\newcommand{\NeuTwo}{Neukirch1977}
\newcommand{\ONa}{Onabe1976}
\newcommand{\WA}{Watkins2004}
\newtheorem{theorem}{Theorem}[section]
\newtheorem{corollary}[theorem]{Corollary}
\newtheorem{lemma}[theorem]{Lemma}
\newtheorem{conjecture}[theorem]{Conjecture}
\theoremstyle{definition}
\newtheorem{algorithm}[theorem]{Algorithm}
\newtheorem*{remark*}{Remark}
\DeclareMathOperator{\characteristic}{char}
\DeclareMathOperator{\Cl}{Cl}
\DeclareMathOperator{\Ext}{Ext}
\DeclareMathOperator{\Gal}{Gal}
\DeclareMathOperator{\Hom}{Hom}
\DeclareMathOperator{\ord}{ord}
\newcommand{\CC}{{\mathbf{C}}}
\newcommand{\FF}{{\mathbf{F}}}
\newcommand{\Zee}{{\mathbf{Z}}}
\newcommand{\Zhat}{{\widehat{\Zee}}}
\newcommand{\Khat}{{\widehat{K}}}
\newcommand{\Que}{\mathbf{Q}}
\newcommand{\Ocal}{{\mathcal{O}}}
\newcommand{\Ocalhat}{{\widehat{\mathcal{O}}}}
\newcommand{\Ohat}{{\widehat{\mathcal{O}}}}
\newcommand\gotha{{\mathfrak{a}}}
\newcommand\gothp{{\mathfrak{p}}}
\newcommand\gothq{{\mathfrak{q}}}
\newcommand{\tto}{\longrightarrow}
\newcommand{\congr}{\equiv}
\newcommand{\iso}{\cong}
\newcommand{\ab}{\text{ab}}
\newcommand{\mapright}[1]{\mathop{\longrightarrow}\limits^{#1}}
\def\isar{\ \mapright{\sim}\ }
\renewcommand{\mod}{\bmod}
\newlength{\algindent}\settowidth{\algindent}{\textit{Output}:\hskip0.5em }
\newlength{\alglabel}\settowidth{\alglabel}{\textit{Output}:}
\newcounter{stepcount}
\newenvironment{alglist}
{\quad\begin{list}{\arabic{stepcount}.}{\leftmargin=\algindent\labelwidth=\algindent\itemsep=\smallskipamount\usecounter{stepcount}}}
{\end{list}}
\newcommand{\algin}{\item[\emph{Input}:]}
\newcommand{\algout}{\item[\emph{Output}:]}
\begin{document}

\title[Imaginary quadratic fields with isomorphic abelian Galois groups]
{Imaginary quadratic fields with \\ isomorphic abelian Galois groups}

\author{Athanasios Angelakis}
\address{Mathematisch Instituut,
         Universiteit Leiden,
         Postbus 9512,
         2300 RA Leiden, The Netherlands}
\email{aangelakis@math.leidenuniv.nl}

\author{Peter Stevenhagen}
\address{Mathematisch Instituut,
         Universiteit Leiden,
         Postbus 9512,
         2300 RA Leiden, The Netherlands}
\email{psh@math.leidenuniv.nl}

\keywords{absolute Galois group, class field theory, group extensions}

\subjclass[2000]{Primary 11R37; Secondary 20K35}

\begin{abstract}
In 1976, Onabe discovered that, in contrast to the Neukirch-Uchida
results that were proved around the same time, a number field $K$ is not
completely characterized by its absolute abelian Galois group $A_K$.
The first examples of nonisomorphic $K$ having isomorphic $A_K$
were obtained on the basis of a classification by Kubota of idele class
character groups in terms of their infinite families of Ulm invariants,
and did not yield a description of~$A_K$.
In this paper, we provide a direct `computation' of the profinite 
group $A_K$ for imaginary quadratic $K$, and use it to obtain
\emph{many} different $K$ that all have 
the \emph{same} \emph{minimal} absolute abelian Galois group.
\end{abstract}

\maketitle

\section{Introduction}
\label{S:intro}
The absolute Galois group $G_K$ of a number field $K$ is a
large profinite group that we cannot currently describe in very
precise terms.
This makes it impossible to answer fundamental questions on $G_K$,
such as the inverse Galois problem over~$K$.
Still, Neukirch \cite{\NeuOne} proved that normal number fields
are completely characterized by their absolute Galois groups:
If $G_{K_1}$ and $G_{K_2}$ are isomorphic as topological groups, then
$K_1$ and $K_2$ are isomorphic number fields.
The result was refined by Ikeda, Iwasawa, and Uchida
(\cite{\NeuTwo}, \cite[Chapter XII, \S2]{\NSW}),
who disposed of the restriction to normal number fields, and showed
that every topological isomorphism $G_{K_1}\isar G_{K_2}$ is actually induced
by an inner automorphism of $G_\Que$.
The same statements hold if all absolute Galois groups are replaced
by their maximal \emph{prosolvable} quotients.

It was discovered by Onabe \cite{\ONa} that the situation changes
if one moves a further step down from $G_K$, to its maximal \emph{abelian}
quotient $A_K=G_K/\overline{[G_K, G_K]}$, which is
the Galois group $A_K=\Gal(K^\ab/K)$ of the maximal abelian
extension $K^\ab$ of $K$.
Even though the Hilbert problem of explicitly generating $K^\ab$ 
for general number fields $K$ is still open after more than a century,
the group $A_K$ can be described by class field theory, as
a quotient of the idele class group of $K$.

Kubota \cite{\KU} studied the group $X_K$ of continuous characters on $A_K$,
and expressed the structure of the $p$-primary parts of
this countable abelian torsion group 
in terms of an infinite number of so-called \emph{Ulm invariants}.
It had been shown by Kaplansky \cite[Theorem 14]{\KA} that such
invariants determine the isomorphism type of a countable reduced
abelian torsion group.
Onabe computed the Ulm invariants of $X_K$ explicitly for a number of small
imaginary quadratic fields $K$, and concluded from this that there exist
nonisomorphic imaginary quadratic fields $K$ and $K'$ for which the
absolute abelian Galois groups $A_K$ and $A_{K'}$ are isomorphic as
profinite groups.
This may even happen in cases where $K$ and $K'$ have different 
class numbers, but the explicit example $K=\Que(\sqrt{-2})$,
$K'=\Que(\sqrt{-5})$ of this that occurs in Onabe's main theorem
\cite[Theorem 2]{\ONa} is incorrect.
This is because the value of the finite Ulm invariants in
\cite[Theorem 4]{\KU} is incorrect for the prime $2$ in case the
ground field is a special number field in the sense of our 
Lemma \ref{lemma:T_K}.
As it happens, $\Que(\sqrt{-5})$ and the exceptional field
$\Que(\sqrt{-2})$ do have different Ulm invariants at 2.
The nature of Kubota's error is similar to an error in Grunwald's theorem
that was corrected by a theorem of Wang occurring in Kubota's paper
\cite[Theorem 1]{\KU}.
It is related to the noncyclic nature of the 2-power cyclotomic 
extension $\Que\subset \Que(\zeta_{2^\infty})$.

In this paper, we obtain Onabe's corrected results by a direct
class field theoretic approach that completely avoids Kubota's dualization
and the machinery of Ulm invariants.
We show that the imaginary quadratic fields $K\ne \Que(\sqrt{-2})$
that are said to be of `type~A' in~\cite{\ONa} share a \emph{minimal}
absolute abelian Galois group that can be described completely 
explicitly as
$$
A_K=\Zhat^2 \times \prod_{n\ge1} \Zee/n\Zee.
$$
The numerical data that we present suggest that these fields are
in fact very common among imaginary quadratic fields: More than 97\% of 
the 2356 fields of odd prime class number $h_K=p<100$ are of this nature.
We believe (Conjecture \ref{conj:infmany})
that there are actually \emph{infinitely many}
$K$ for which $A_K$ is the minimal group above.
Our belief is supported by certain reasonable assumptions on
the average splitting behavior of exact sequences of abelian groups, and
these assumptions are tested numerically in the final section
of the paper.

\section{Galois groups as \texorpdfstring{$\Zhat$}{Zhat}-modules}
\label{S:Zhatmodules}
The profinite abelian Galois groups that we study in this paper
naturally come with a topology for which the identity has a basis of open
neighborhoods that are open subgroups of finite index.
This implies that they are not simply $\Zee$-modules, but that
the exponentiation in these groups with ordinary integers extends to
exponentiation with elements of the profinite completion
$\Zhat=\lim_{\leftarrow n} \Zee/n\Zee$ of $\Zee$.
By the Chinese remainder theorem, we have a decomposition
of the profinite ring $\Zhat=\prod_p\Zee_p$ into a product
of rings of $p$-adic integers, with $p$ ranging over all primes.
As $\Zhat$-modules, our Galois groups decompose correspondingly
as a product of pro-$p$-groups.

It is instructive to look first at the $\Zhat$-module structure of
the absolute abelian Galois group $A_\Que$ of $\Que$,
which we know very explicitly by the Kronecker-Weber theorem.
This theorem states that $\Que^\ab$ is the maximal cyclotomic
extension of~$\Que$, and that an element $\sigma\in A_\Que$ acts on the roots
of unity that generate $\Que^\ab$ by exponentiation.
More precisely, we have $\sigma(\zeta)=\zeta^u$ for all roots of unity,
with $u$ a uniquely defined element $u$ in the unit group $\Zhat^*$ of the
ring $\Zhat$.
This yields the well-known isomorphism
$A_\Que=\Gal(\Que^\ab/\Que)\iso \Zhat^*=\prod_p\Zee_p^*$.

For odd $p$, the group $\Zee_p^*$ consists of a finite torsion subgroup $T_p$
of $(p-1)$-st roots of unity, and we have an isomorphism
$$
\Zee_p^* = T_p \times (1+p\Zee_p) \iso T_p \times \Zee_p
$$
because $1+p\Zee_p$ is a free $\Zee_p$-module generated by $1+p$.
For $p=2$ the same is true with $T_2=\{\pm1\}$ and $1+4\Zee_2$
the free $\Zee_2$-module generated by $1+4=5$.
Taking the product over all $p$, we obtain
\begin{equation}
\label{eq:A_Q}
A_\Que\iso T_\Que \times \Zhat,
\end{equation}
with $T_\Que=\prod_p T_p$ the product of the torsion subgroups 
$T_p\subset\Que_p^*$ of the multiplicative groups of the
completions $\Que_p$ of~$\Que$.
More canonically, $T_\Que$ is the \emph{closure} of the torsion
subgroup of $A_\Que=\Gal(\Que^\ab/\Que)$,
and $A_\Que/T_\Que$ is a free $\Zhat$-module of rank~1.
The invariant field of $T_\Que$ inside $\Que^\ab$ is the unique
$\Zhat$-extension of $\Que$.

Even though it looks at first sight as if the isomorphism type of
$T_\Que$ depends on the properties of prime numbers,
one should realize that in an infinite product of
finite cyclic groups, the Chinese remainder theorem allows us to
rearrange factors in many different ways.
One has for instance a noncanonical isomorphism
\begin{equation}
\label{eq:T_Q}
T_\Que = \prod_p T_p \iso \prod_{n\ge 1} \Zee/n\Zee,
\end{equation}
as both of these products, when written as a countable product of
cyclic groups of prime power order, have an infinite number of
factors $\Zee/\ell^k\Zee$ for each prime power~$\ell^k$.
Note that, for the product $\prod_p T_p$ of cyclic groups of order
$p-1$ (for~$p\ne2$), this statement is not completely trivial: It
follows from the existence, by the well-known theorem of Dirichlet,
of infinitely many primes $p$ that are congruent to $1\mod \ell^k$,
but not to $1\mod \ell^{k+1}$.

Now suppose that $K$ is an arbitrary number field, with ring 
of integers $\Ocal$.
By class field theory, $A_K$ is the quotient of
the idele class group $C_K=(\prod'_{\gothp\le\infty} K_\gothp^*)/K^*$
of~$K$ by the connected component of the identity.
In the case of imaginary quadratic fields $K$, this connected
component is the subgroup $K_\infty^*=\CC^*\subset C_K$ coming
from the unique infinite prime of $K$, and in this case the
Artin isomorphism for the absolute abelian Galois group $A_K$ of $K$
reads
\begin{equation}
\label{eq:A_K}
A_K= \Khat^*/K^* = \Bigl(\mathop{\prod\nolimits^\prime}_\gothp K_\gothp^*\Bigr)/K^*.
\end{equation}
Here $\Khat^* = \prod^\prime_\gothp K_\gothp^*$ is the group
of \emph{finite} ideles of $K$, that is, 
the restricted direct product of the groups $K_\gothp^*$ at the finite
primes $\gothp$ of $K$, taken with respect to the
unit groups~$\Ocal_\gothp^*$ of the local rings of integers.
For the purposes of this paper, which tries to describe $A_K$ as
a profinite abelian group, it is convenient to treat the isomorphism
for $A_K$ in \eqref{eq:A_K} as an identity --  as we have written it down.

The expression \eqref{eq:A_K} is somewhat more involved than the
corresponding identity $A_\Que=\Zhat^*$ for the rational number field,
but we will show in Lemma \ref{lemma:T_K}
that the \emph{inertial part} of $A_K$,
that is, the subgroup $U_K\subset A_K$
generated by all inertia groups $\Ocal_\gothp^*\subset C_K$, admits
a description very similar to \eqref{eq:A_Q}.

Denote by $\Ocalhat=\prod_\gothp \Ocal_\gothp$ the profinite
completion of the ring of integers $\Ocal$ of $K$.
In the case that $K$ is imaginary quadratic,
the inertial part of $A_K$ takes the form
\begin{equation}
\label{eq:inertial}
U_K = \Bigl(\prod_\gothp \Ocal_\gothp^*\Bigr)/\Ocal^* = \Ocalhat^*/\mu_K,
\end{equation}
since the unit group $\Ocal^*$ of $\Ocal$ is then equal to the
group $\mu_K$ of roots of unity in~$K$.
Apart from the quadratic fields of discriminant $-3$ and $-4$, which have
6 and 4 roots of unity, respectively, we always have $\mu_K=\{\pm 1\}$,
and \eqref{eq:inertial} can be viewed as the analogue for~$K$ of the group 
$\widehat\Zee^*=A_\Que$.

In the next section, we determine the structure of the group 
$\Ocalhat^*/\mu_K$. As the approach works for any number field, we will not assume that
$K$ is imaginary quadratic until the very end of that section.

\section{Structure of the inertial part}
\label{S:inertialstructure}

Let $K$ be any number field, and $\Ocalhat=\prod_\gothp \Ocal_\gothp$
the profinite completion of its ring of integers.
Denote by $T_\gothp\subset \Ocal_\gothp^*$ the
subgroup of local roots of unity in $K_\gothp^*$, and put
\begin{equation}
T_K=\prod_\gothp T_\gothp \subset \prod_\gothp \Ocal_\gothp^* = \Ocalhat^*.
\end{equation}
The analogue of \eqref{eq:A_Q} for $K$ is the following.

\begin{lemma}
\label{lemma:Ohatstar}
The closure of the torsion subgroup of $\Ocalhat^*$ is equal to $T_K$,
and $\Ocalhat^*/T_K$ is a free $\Zhat$-module of rank $[K:\Que]$.
Less canonically, we have an isomorphism
$$
\Ocalhat^* \iso T_K \times \Zhat^{[K:\Que]}.
$$
\end{lemma}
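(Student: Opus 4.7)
My plan is to establish the decomposition locally, prime by prime, using the classical structure theorem for units of a $p$-adic field, and then reassemble the pieces over all rational primes $p$.

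First I would invoke, for each finite prime $\gothp$ of $K$ above a rational prime $p$, the standard fact that the local unit group $\Ocal_\gothp^*$ is a finitely generated $\Zee_p$-module whose torsion subgroup is the finite group $T_\gothp$ of local roots of unity, and that $\Ocal_\gothp^*/T_\gothp$ is a \emph{free} $\Zee_p$-module of rank $[K_\gothp:\Que_p]=e_\gothp f_\gothp$ (as one sees via the $p$-adic logarithm on a sufficiently deep principal unit subgroup). Freeness of the quotient forces the short exact sequence $1\to T_\gothp\to \Ocal_\gothp^*\to \Ocal_\gothp^*/T_\gothp\to 1$ to split, yielding $\Ocal_\gothp^*\iso T_\gothp\times \Zee_p^{e_\gothp f_\gothp}$.

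Next I would group the primes $\gothp$ above each rational prime $p$ and use the fundamental identity $\sum_{\gothp\mid p}e_\gothp f_\gothp=[K:\Que]$ to obtain
\[
\prod_{\gothp\mid p}\Ocal_\gothp^* \iso \Bigl(\prod_{\gothp\mid p}T_\gothp\Bigr)\times \Zee_p^{[K:\Que]}.
\]
Taking the product over all primes $p$ and invoking $\Zhat=\prod_p\Zee_p$ then yields simultaneously the splitting $\Ocalhat^*\iso T_K\times \Zhat^{[K:\Que]}$ and the freeness of $\Ocalhat^*/T_K$ as a $\Zhat$-module of rank $[K:\Que]$.

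Finally, to identify $T_K$ intrinsically as the closure of the torsion subgroup of $\Ocalhat^*$, I would observe that $T_K=\prod_\gothp T_\gothp$ is a product of finite groups, hence compact and closed, and that every torsion element of $\Ocalhat^*$ plainly lies in $T_K$; conversely, any $(t_\gothp)_\gothp\in T_K$ is a limit of its truncations to finitely many coordinates, each of finite order, so the torsion is dense in $T_K$. I do not anticipate a serious obstacle: the local splitting is a textbook fact on $p$-adic units and the remainder is bookkeeping. The one subtlety worth flagging is that for the infinite product $T_K$ the naive torsion subgroup is strictly smaller than $T_K$ itself, the component orders being unbounded in general, which is precisely why the intrinsic description must pass through the \emph{closure}.
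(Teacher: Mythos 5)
Your proposal is correct and follows essentially the same route as the paper: a local splitting $\Ocal_\gothp^*\iso T_\gothp\times\Zee_p^{[K_\gothp:\Que_p]}$ from the standard structure theory of local units (the paper gets it from the Teichm\"uller lift together with Hasse's one-unit theorem, you from the $p$-adic logarithm, which is the same textbook fact), followed by summing local degrees over $\gothp\mid p$ and taking the product over all $p$. Your explicit treatment of why the \emph{closure} of the torsion subgroup equals $T_K$ (torsion lands in $\prod_\gothp T_\gothp$, truncations are dense, and the naive torsion subgroup is strictly smaller) is a correct and slightly more detailed version of what the paper dispatches by appeal to the product topology.
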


\begin{proof}
As the finite torsion subgroup $T_\gothp\subset \Ocal_\gothp^*$ is closed in
$\Ocal_\gothp^*$, the first statement follows from the definition
of the product topology on $\Ocalhat^*=\prod_\gothp \Ocal_\gothp^*$.

Reduction modulo $\gothp$ in the local unit group $\Ocal_\gothp^*$
gives rise to an exact sequence 
$$
1\to 1+\gothp\tto\Ocal_\gothp^*\tto k_\gothp^*\to 1
$$
that can be split by mapping the elements
of the unit group $k_\gothp^*$ of the residue class field 
to their Teichm\"uller representatives in $\Ocal_\gothp^*$.
These form the cyclic group of order 
$\#k_\gothp^*=N\gothp-1$ in $T_\gothp$ consisting of
the elements of order coprime to $p=\characteristic(k_\gothp)$.
The kernel of reduction $1+\gothp$ is by \cite[one-unit theorem, p.~231]{\HA}
a finitely generated $\Zee_p$-module of
free rank $[K_\gothp:\Que_p]$ having a finite torsion group
consisting of roots of unity in $T_\gothp$ of $p$-power order.
Combining these facts, we find that $\Ocal_\gothp^*/T_\gothp$ is free 
over $\Zee_p$ of rank $[K_\gothp:\Que_p]$ or, less canonically, that we
have a local isomorphism
$$
\Ocal_\gothp^* \iso T_\gothp \times \Zee_p^{[K_\gothp:\Que_p]}
$$
for each prime $\gothp$.
Taking the product over all $\gothp$, and using the fact that the sum of the
local degrees at $p$ equals the global degree $[K:\Que]$, 
we obtain the desired global conclusion.
\end{proof}

In order to derive a characterization of $T_K=\prod_\gothp T_\gothp$
for arbitrary number fields~$K$ similar to \eqref{eq:T_Q},
we observe that we have an exact divisibility
$\ell^k\parallel \#T_\gothp$ of the order of $T_\gothp$ by a prime power $\ell^k$
if and only if the local field $K_\gothp$ at $\gothp$ contains
a primitive $\ell^k$-th root of unity,
but \emph{not} a primitive $\ell^{k+1}$-th root of unity.
We may reword this as: The prime $\gothp$ splits completely in the
cyclotomic extension $K\subset K(\zeta_{\ell^k})$, but \emph{not}
in the cyclotomic extension $K\subset K(\zeta_{\ell^{k+1}})$.
If such $\gothp$ exist at all for $\ell^k$, then there are infinitely
many of them, by the Chebotarev density theorem.

Thus, $T_K$ can be written as a product of groups 
$(\Zee/\ell^k\Zee)^\Zee=\hbox{Map}(\Zee, \Zee/\ell^k\Zee)$
that are themselves countable products of cyclic groups of order $\ell^k$.
The prime powers $\ell^k>1$ that occur for $K$ are \emph{all} but
those for which we have an equality
$$
K(\zeta_{\ell^k}) = K(\zeta_{\ell^{k+1}}).
$$
For $K=\Que$ all prime powers $\ell^k$ occur, but for
general $K$, there are finitely many prime powers that may disappear.
This is due to the fact that the infinite cyclotomic extension 
$\Que\subset \Que(\zeta_{\ell^\infty})$ with group $\Zee_\ell^*$ can
partially `collapse' over $K$.

To describe the exceptional prime powers $\ell^k$ that disappear for $K$,
we consider, for $\ell$ an \emph{odd} prime, the number
$$
w(\ell)=w_K(\ell)=\# \mu_{\ell^\infty}\left(K(\zeta_\ell)\right)
$$
of $\ell$-power roots of unity in the field $K(\zeta_\ell)$.
For almost all $\ell$, this number equals $\ell$, and we call
$\ell$ \emph{exceptional} for $K$ if it is divisible by $\ell^2$.
Note that no odd exceptional prime numbers exist for imaginary
quadratic fields $K$.

For the prime $\ell=2$, we consider instead the number
$$
w(2)=w_K(2)=\# \mu_{2^\infty}\left(K(\zeta_4)\right)
$$
of 2-power roots in $K(\zeta_4)=K(i)$.
If $K$ contains $i=\zeta_4$, or if $w(2)$ is divisible by~8, we call 2
\emph{exceptional} for $K$.
Note that the only imaginary quadratic fields $K$ for which 2 is exceptional
are $\Que(i)$ and $\Que(\sqrt{-2})$.

The number $w(K)$ of \emph{exceptional roots of unity} for $K$
is now defined as 
$$
w(K)=\prod_{\ell \text{ exceptional}} w(\ell).
$$
Note that $w(K)$ refers to roots of unity that may or may not be
contained in $K$ itself, and that every prime $\ell$ dividing $w(K)$
occurs with exponent at least 2.
The prime powers $\ell^k>1$ that do \emph{not} occur when 
$T_K$ is written as a direct product of groups $(\Zee/\ell^k\Zee)^\Zee$
are the \emph{strict} divisors of $w(\ell)$ at exceptional primes $\ell$,
with the exceptional prime $\ell=2$ giving rise to a special case.

\begin{lemma}
\label{lemma:T_K}
Let $K$ be a number field, and $w=w(K)$ its number of exceptional roots
of unity.
Then we have a noncanonical isomorphism of profinite groups
$$
T_K=\prod_{\gothp} T_\gothp \iso \prod_{n \geq 1} \Zee /nw\Zee,
$$
except when $2$ is exceptional for $K$ and $i=\zeta_4$ is not contained in $K$.
In this special case, we have
$$
T_K=\prod_{\gothp} T_\gothp \iso \prod_{n \geq 1}
(\Zee/2\Zee\times \Zee / nw\Zee) .
$$
The group $T_K$ is isomorphic to the group $T_\Que$ in
\textup{\eqref{eq:T_Q}} if and only if we have $w=1$.
\end{lemma}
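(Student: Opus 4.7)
The plan is to decompose $T_K=\prod_\gothp T_\gothp$ prime by prime and match the result to the $\ell$-primary decomposition of the alleged right-hand side. Since each $T_\gothp$ is cyclic, the $\ell$-primary part of $T_K$ is the product over $\gothp$ of cyclic $\ell$-groups. The text has already observed, via the Chebotarev density theorem, that for each prime power $\ell^k>1$ the set of $\gothp$ with $\ell^k\parallel\#T_\gothp$ is either empty or infinite, the latter happening exactly when $K(\zeta_{\ell^k})\subsetneq K(\zeta_{\ell^{k+1}})$. Thus the $\ell$-primary part of $T_K$ equals $\prod_{k\text{ occurring}}(\Zee/\ell^k\Zee)^\Zee$, and the task reduces to determining the occurring~$k$.

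For odd $\ell$, the fact that $\Que(\zeta_{\ell^\infty})/\Que(\zeta_\ell)$ is a $\Zee_\ell$-extension, together with the equality $K(\zeta_\ell)=K(\zeta_{w(\ell)})$, implies that $K(\zeta_{\ell^\infty})/K(\zeta_\ell)$ is a $\Zee_\ell$-extension in which every finite step has degree~$\ell$. Writing $w(\ell)=\ell^r$, it follows that $\ell^k$ occurs for $T_K$ exactly when $k\geq r=v_\ell(w)$. For $\ell=2$ the same argument above the base $K(i)=K(\zeta_4)$, with $w(2)=2^s$, shows that $2^k$ occurs for every $k\geq s$. The only additional consideration is the first step $K=K(\zeta_2)\subseteq K(\zeta_4)=K(i)$, which is strict precisely when $i\notin K$; in that case $2^1$ also occurs, sitting below the $\Zee_2$-extension part of the tower.

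For the candidate right-hand side $\prod_{n\geq1}\Zee/nw\Zee$, the $\ell$-primary part equals $\prod_{j\geq v_\ell(w)}(\Zee/\ell^j\Zee)^\Zee$, since the $\ell$-part of $\Zee/nw\Zee$ is $\Zee/\ell^{v_\ell(n)+v_\ell(w)}\Zee$ and each non-negative value of $v_\ell(n)$ occurs for infinitely many~$n$. Comparison with the $\ell$-primary part of $T_K$ gives agreement in every case except when $2$ is exceptional and $i\notin K$: then $v_2(w)=s\geq 3$, and the target is missing the countably many $\Zee/2\Zee$ factors that $T_K$ receives from the isolated occurrence of $2^1$. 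Adjoining a $\Zee/2\Zee$ to each factor $\Zee/nw\Zee$ restores exactly these copies, yielding the second formula of the lemma. The final assertion $T_K\cong T_\Que$ iff $w=1$ is then immediate: $T_\Que$ contains $(\Zee/\ell^k\Zee)^\Zee$ as a factor for every prime power $\ell^k>1$, so an isomorphism $T_K\cong T_\Que$ forces the absence of exceptional primes.

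The main obstacle is the $\ell=2$ bookkeeping: one must verify both that the first step of the $2$-tower contributes the correct countable multiplicity of $\Zee/2\Zee$, and that this ``orphaned'' factor is genuinely absorbed by the external direct product $\Zee/2\Zee\times\Zee/nw\Zee$ rather than folded into a single cyclic $\Zee/2nw\Zee$, which would produce a different profinite group.
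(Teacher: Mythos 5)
Your argument is correct and follows essentially the same route as the paper's proof: analyze the cyclotomic towers $K(\zeta_{\ell^k})\subset K(\zeta_{\ell^{k+1}})$ to determine which prime powers $\ell^k$ occur as orders of summands of $T_K$ (with the isolated $k=1$ step at $\ell=2$ handled separately), and match this against the $\ell$-primary decomposition of $\prod_{n\ge1}\Zee/nw\Zee$. You are somewhat more explicit than the paper in the final comparison of $\ell$-primary parts; the only blemish is the harmless conflation of $r$ in $w(\ell)=\ell^r$ with $v_\ell(w)$ for non-exceptional $\ell$ (where $r=1$ but $v_\ell(w)=0$), which does not affect the conclusion.
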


\begin{proof}
If $\ell$ is odd, the tower of field extensions
$$
K(\zeta_\ell) \subset K(\zeta_{\ell^2})\subset \cdots \subset
K(\zeta_{\ell^k}) \subset K(\zeta_{\ell^{k+1}}) \subset \cdots
$$
is a $\Zee_\ell$-extension, and the steps
$K(\zeta_{\ell^k}) \subset K(\zeta_{\ell^{k+1}})$ with $k\ge1$ in this tower
that are equalities are exactly those for which $\ell^{k+1}$ divides $w(\ell)$.

Similarly, the tower of field extensions 
$$
K(\zeta_4) \subset K(\zeta_8)\subset \cdots \subset
K(\zeta_{2^k}) \subset K(\zeta_{2^{k+1}}) \subset \cdots
$$
is a $\Zee_2$-extension in which the steps
$K(\zeta_{2^k}) \subset K(\zeta_{2^{k+1}})$ with $k\ge2$ that are equalities
are exactly those for which $2^{k+1}$ divides $w(2)$.
The extension $K=K(\zeta_2)\subset K(\zeta_4)$ that we have in the remaining
case $k=1$ is an equality if and only if $K$ contains $i=\zeta_4$.

Thus, a prime power $\ell^k>2$ that does not occur when $T_K$ is
written as a product of groups $(\Zee/\ell^k\Zee)^\Zee$ is the same as
a \emph{strict} divisor $\ell^k>2$ of $w(\ell)$ at an exceptional
prime $\ell$.
The special prime power $\ell^k=2$ does not occur if and only if 
$i=\zeta_4$ is in~$K$.
Note that in this case, 2 is by definition exceptional for $K$.

It is clear that replacing the group $\prod_{n \geq 1} \Zee /n\Zee$
from \eqref{eq:T_Q} by $\prod_{n \geq 1} \Zee /nw\Zee$ has the effect
of removing cyclic summands of order $\ell^k$ with $\ell^{k+1}\mid w$,
and this shows that the groups given in the Lemma are indeed isomorphic
to $T_K$.
Only for $w=1$ we obtain the group $T_\Que$ in which all prime powers
$\ell^k$ arise.
\end{proof}

Lemmas \ref{lemma:Ohatstar} and \ref{lemma:T_K} tell us what 
$\Ocalhat^*$ looks like as a $\Zhat$-module.
In particular, it shows that the dependence on $K$ is limited 
to the degree $[K:\Que]$, which is reflected in the rank of the
free $\Zhat$-part of $\Ocalhat^*$, and the nature of the
exceptional roots of unity for $K$.
For the group $\Ocalhat^*/\mu_K$, the same is true,
but the proof requires an extra argument, and the following lemma.

\begin{lemma}
\label{lemma:goodp}
There are infinitely many primes $\gothp$ of $K$ for which we have 
$$
\gcd(\#\mu_K, \#T_\gothp/\#\mu_K) = 1.
$$
\end{lemma}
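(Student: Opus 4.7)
The plan is to apply the Chebotarev density theorem to a carefully chosen cyclotomic extension of $K$. Set $m=\#\mu_K$ and, for each prime $\ell$ dividing $m$, let $v_\ell$ be the $\ell$-adic valuation of $m$, so that $\zeta_{\ell^{v_\ell}}\in K$ while $\zeta_{\ell^{v_\ell+1}}\notin K$. The desired condition $\gcd(m,\#T_\gothp/m)=1$ amounts, via the characterization of $\#T_\gothp$ recalled just before Lemma~\ref{lemma:T_K}, to the requirement that $v_\ell(\#T_\gothp)=v_\ell$ for every $\ell\mid m$. The inequality $v_\ell(\#T_\gothp)\ge v_\ell$ is automatic from $\mu_K\hookrightarrow T_\gothp$, so it suffices to arrange that for every $\ell\mid m$, the prime $\gothp$ does \emph{not} split completely in $K_\ell:=K(\zeta_{\ell^{v_\ell+1}})$.

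Next I would verify that each $K_\ell/K$ has degree exactly $\ell$. Since $\zeta_{\ell^{v_\ell}}=\zeta_{\ell^{v_\ell+1}}^\ell\in K$, the field $K_\ell$ is generated over $K$ by an $\ell$-th root of a known element, so $[K_\ell:K]$ divides $\ell$, and it cannot be $1$ by the choice of $v_\ell$. The case $\ell=2$ is handled uniformly because $-1\in\mu_K$ forces $v_2\ge 1$, so we are adjoining $\sqrt{\zeta_{2^{v_2}}}$ rather than $\sqrt{-1}$ itself; this is the only spot where care is required, the analogous subtlety at the prime $2$ being precisely the one responsible for the Kubota--Onabe error flagged in the introduction.

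Because the extensions $K_\ell/K$ indexed by the primes $\ell\mid m$ have pairwise coprime prime degrees, they are pairwise linearly disjoint over $K$. Their compositum $L$ therefore satisfies
$$
\Gal(L/K)\iso\prod_{\ell\mid m}\Gal(K_\ell/K)\iso\prod_{\ell\mid m}\Zee/\ell\Zee.
$$
Pick $\sigma\in\Gal(L/K)$ that is nontrivial in every factor. Since $L/K$ is abelian, $\{\sigma\}$ is its own conjugacy class, and the Chebotarev density theorem produces infinitely many unramified primes $\gothp$ of $K$ whose Frobenius equals $\sigma$. For any such $\gothp$ the image of Frobenius in each $\Gal(K_\ell/K)$ is nontrivial, so $\gothp$ fails to split completely in any $K_\ell$, which is exactly the property required. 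The main potential obstacle is the degree-$\ell$ claim at $\ell=2$, and once that is safely verified everything else is a routine Chebotarev argument on an explicit abelian extension of squarefree degree $\prod_{\ell\mid m}\ell$.
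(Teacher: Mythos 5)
Your proof is correct and follows essentially the same route as the paper's: both reduce the gcd condition to requiring that $\gothp$ be nonsplit (equivalently, inert) in each of the prime-degree cyclotomic extensions $K\subset K(\zeta_{\ell^{k+1}})$ for $\ell^k$ exactly dividing $\#\mu_K$, and then invoke Chebotarev on their compositum. Your version merely spells out the details (linear disjointness via coprime degrees, the degree-$\ell$ claim, the choice of a Frobenius class nontrivial in every factor) that the paper's three-sentence proof leaves implicit.
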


\begin{proof}
For every prime power $\ell^k>1$ that exactly divides $\#\mu_K$, the
extension $K=K(\zeta_{\ell^k}) \subset K(\zeta_{\ell^{k+1}})$
is a cyclic extension of prime degree $\ell$.
For different prime powers $\ell^k\parallel \#\mu_K$, we get
different extensions, so infinitely many primes~$\gothp$ of~$K$
are inert in all of them.
For such $\gothp$, we have $\gcd(\#\mu_K, \#T_\gothp/\#\mu_K) = 1$.
\end{proof}

\begin{lemma}
We have a noncanonical isomorphism
$T_K/\mu_K \iso T_K$.
\end{lemma}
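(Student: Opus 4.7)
The plan is to use Lemma \ref{lemma:goodp} to extract infinitely many canonical copies of $\mu_K$ as direct factors of $T_K$, one of which will absorb the diagonal copy we mod out.

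First, invoke Lemma \ref{lemma:goodp} to choose a countably infinite sequence $\gothp_0, \gothp_1, \gothp_2, \dots$ of primes of $K$ satisfying $\gcd(\#\mu_K, \#T_\gothp/\#\mu_K) = 1$. Each $T_\gothp$ is a finite cyclic subgroup of $K_\gothp^*$, so at each such ``good'' prime this coprimality provides a \emph{canonical} internal direct product decomposition $T_\gothp = \mu_K \times C_\gothp$, where $C_\gothp$ is the unique subgroup of $T_\gothp$ of order $\#T_\gothp/\#\mu_K$; in particular, under this decomposition the local inclusion $\mu_K \hookrightarrow T_\gothp$ lands precisely in the first factor.

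Second, using $\gothp_0$ alone, the composite
$$\pi \colon T_K \tto T_{\gothp_0} \tto \mu_K$$
(second arrow being projection onto the $\mu_K$-factor of the canonical decomposition) retracts the diagonal embedding $\mu_K \hookrightarrow T_K$. This splits the short exact sequence $1 \to \mu_K \to T_K \to T_K/\mu_K \to 1$ and identifies
$$T_K/\mu_K \iso \ker \pi = C_{\gothp_0} \times \prod_{\gothp \neq \gothp_0} T_\gothp.$$

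Third, using the remaining good primes $\gothp_1, \gothp_2, \dots$ to pull a canonical $\mu_K$-factor out of each $T_{\gothp_i}$, both sides can be rewritten as
$$T_K \iso \mu_K \times \Bigl(\prod_{i \geq 1} \mu_K\Bigr) \times R, \qquad T_K/\mu_K \iso \Bigl(\prod_{i \geq 1} \mu_K\Bigr) \times R,$$
where $R = C_{\gothp_0} \times \prod_{i \geq 1} C_{\gothp_i} \times \prod_{\gothp \notin \{\gothp_j\}_{j \geq 0}} T_\gothp$. The trivial shift isomorphism $\mu_K \times \prod_{i \geq 1} \mu_K \iso \prod_{i \geq 1} \mu_K$ then yields $T_K \iso T_K/\mu_K$. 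The one delicate point is the canonical nature of the decomposition $T_\gothp = \mu_K \times C_\gothp$ at good primes: this is precisely what makes the projection onto the first factor retract the \emph{diagonal} embedding of $\mu_K$, and it is exactly the content of the coprimality condition in Lemma \ref{lemma:goodp}.
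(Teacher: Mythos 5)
Your proof is correct, and its first half is essentially the paper's: both arguments hinge on Lemma \ref{lemma:goodp} producing a prime $\gothp_0$ at which the coprimality $\gcd(\#\mu_K,\#T_{\gothp_0}/\#\mu_K)=1$ forces the cyclic group $T_{\gothp_0}$ to decompose as $\mu_K\times C_{\gothp_0}$, which splits off the diagonal $\mu_K$ and gives $T_K/\mu_K\iso C_{\gothp_0}\times\prod_{\gothp\ne\gothp_0}T_\gothp$. (The paper phrases this as splitting the quotient sequence onto $T_{\gothp_0}/\mu_K$ rather than retracting onto the sub $\mu_K$, but the resulting decomposition is the same.) Where you genuinely diverge is the last step, identifying $C_{\gothp_0}\times\prod_{\gothp\ne\gothp_0}T_\gothp$ with $T_K$. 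The paper observes that the cyclic prime-power summands being altered already occur infinitely often in $T_K$, so both groups are products of the same groups $(\Zee/\ell^k\Zee)^{\Zee}$; this leans on the Chebotarev-based structure theory of $T_K$ developed just before Lemma \ref{lemma:T_K}. You instead exploit the full strength of Lemma \ref{lemma:goodp} -- \emph{infinitely many} good primes -- to peel a $\mu_K$-factor off each $T_{\gothp_i}$ and absorb the discrepancy by the shift $\mu_K\times\prod_{i\ge1}\mu_K\iso\prod_{i\ge1}\mu_K$. Your route is more self-contained (no appeal to which prime powers occur in $T_K$, hence no Chebotarev beyond Lemma \ref{lemma:goodp}), at the cost of carrying an infinite family of primes; the paper's route uses one prime but borrows the global description of $T_K$ it has already paid for. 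Both are valid proofs of the lemma.
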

\begin{proof}
Pick a prime $\gothp_0$ of $K$ that satisfies
the conditions of Lemma \ref{lemma:goodp}.
Then $\mu_K$ embeds as a direct summand in $T_{\gothp_0}$,
and we can write $T_{\gothp_0} \iso \mu_K \times T_{\gothp_0}/\mu_K$
as a product of two cyclic groups of coprime order.
It follows that the natural exact sequence
$$
1\to \prod_{\gothp\ne\gothp_0} T_\gothp
\tto  T_K/\mu_K \tto T_{\gothp_0}/\mu_K \to 1
$$
can be split using the composed map
$T_{\gothp_0}/\mu_K \to T_{\gothp_0} \to T_K \to T_K/\mu_K$.
This makes $T_K/\mu_K$ isomorphic to the product of
$\prod_{\gothp\ne\gothp_0} T_\gothp$
and a cyclic group for which the order is a product of prime powers
that already `occur' infinitely often in $T_K$.
Thus $T_K/\mu_K$ is isomorphic to a product of exactly the same 
groups $(\Zee/\ell^k\Zee)^\Zee$ that occur in $T_K$, and therefore
isomorphic to $T_K$ itself.
\end{proof}

For imaginary quadratic $K$, where $\Ocalhat^*/\mu_K$
constitutes the inertial part $U_K$ of~$A_K$
from \eqref{eq:inertial}, we
summarize the results of this section in the following way.

\begin{theorem}
\label{thm:U_K}
Let $K$ be an imaginary quadratic field.
Then the subgroup $T_K/\mu_K$ of $U_K$ is a direct summand of~$U_K$.
For $K\ne \Que(i), \Que(\sqrt{-2})$,
we have isomorphisms
$$
U_K= \Ocalhat^*/\mu_K \iso
\Zhat^2 \times (T_K/\mu_K) 
\iso
\Zhat^2 \times \prod_{n = 1}^{\infty} \Zee / n\Zee 
$$ 
of profinite groups.
\end{theorem}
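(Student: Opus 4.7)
The plan is to assemble the theorem from the three structural results of this section: Lemma \ref{lemma:Ohatstar} gives the global shape of $\Ocalhat^*$, the preceding lemma identifies $T_K/\mu_K$ with $T_K$, and Lemma \ref{lemma:T_K} pins down $T_K$ itself. The imaginary quadratic hypothesis enters in two places only: it fixes $[K:\Que]=2$, and it ensures that no prime is exceptional for $K$ as soon as we exclude $\Que(i)$ and $\Que(\sqrt{-2})$.

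First, I would upgrade the decomposition of $\Ocalhat^*$ supplied by Lemma \ref{lemma:Ohatstar} to a decomposition of $U_K$. That lemma asserts that the short exact sequence
$$
1 \to T_K \tto \Ocalhat^* \tto \Ocalhat^*/T_K \to 1
$$
splits, because the quotient is a free $\Zhat$-module, of rank $2$ in the imaginary quadratic case. Fix such a splitting and let $F \subset \Ocalhat^*$ be the image of the section, so that $\Ocalhat^* = T_K \cdot F$ is an internal direct sum and $F \iso \Zhat^2$. Since $\mu_K \subset T_K$, we have $F \cap \mu_K = 1$, so $F$ maps injectively into $U_K = \Ocalhat^*/\mu_K$ and meets the image of $T_K$ only trivially. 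This yields an internal direct sum
$$
U_K = (T_K/\mu_K) \oplus F \iso (T_K/\mu_K) \times \Zhat^2,
$$
which both proves that $T_K/\mu_K$ is a direct summand of $U_K$ and establishes the first of the two displayed isomorphisms.

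For the second isomorphism I would combine $T_K/\mu_K \iso T_K$ from the preceding lemma with Lemma \ref{lemma:T_K} applied to $T_K$. That lemma yields $T_K \iso \prod_{n\ge 1}\Zee/n\Zee$ provided $w(K)=1$ and provided we are not in the special case where $2$ is exceptional and $i \notin K$; both conditions follow from the remarks immediately preceding Lemma \ref{lemma:T_K}, since no odd prime is exceptional for an imaginary quadratic field and $2$ is exceptional only for the two fields excluded by hypothesis. I do not expect a serious obstacle: the splitting is essentially automatic from the projectivity of the free $\Zhat$-module $\Zhat^2$, and the only point that really needs care is to verify that the exclusion of $\Que(i)$ and $\Que(\sqrt{-2})$ is exactly what makes $w(K)=1$, so that the generic clause of Lemma \ref{lemma:T_K} rather than its special clause applies.
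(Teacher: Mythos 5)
Your proposal is correct and follows the paper's intended argument: the paper states Theorem \ref{thm:U_K} explicitly as a summary of Lemma \ref{lemma:Ohatstar}, the lemma $T_K/\mu_K\iso T_K$, and Lemma \ref{lemma:T_K}, with the observation that $w(K)=1$ once $\Que(i)$ and $\Que(\sqrt{-2})$ are excluded. Your explicit construction of the complement $F$ from a section of the free quotient, and the check $F\cap\mu_K=1$ via $\mu_K\subset T_K$, is exactly the right way to make the direct-summand claim precise.
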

For $K$ equal to $\Que(i)$ or $\Que(\sqrt{-2})$, 
the prime 2 is exceptional for $K$,
and the groups $T_K/\mu_K\iso T_K$ are different as they
do not have cyclic summands of order~2 and~4, respectively.

\section{Extensions of Galois groups}
\label{S:extensions}
In the previous section, all results could easily be stated and
proved for arbitrary number fields.
From now on, $K$ will denote an imaginary quadratic field.
In order to describe the full group $A_K$
from \eqref{eq:A_K}, we consider the exact sequence
\begin{equation}
\label{eq:extension}
1\to U_K=\Ohat^*/\mu_K
\tto A_K = \Khat^*/K^*
\mapright{\psi} \Cl_K \to 1
\end{equation}
that describes the class group $\Cl_K$ of $K$ in idelic terms. 
Here $\psi$ maps the class of the finite idele $(x_\gothp)_\gothp\in\Khat^*$
to the class of its associated ideal
$\prod_\gothp \gothp^{e_\gothp}$, with $e_\gothp= \ord_\gothp x_\gothp$.

The sequence \eqref{eq:extension} shows that $U_K$ is an open  subgroup of $A_K$
of index equal to the class number $h_K$ of $K$.
In view of Theorem \ref{thm:U_K}, this immediately yields Onabe's discovery
that different $K$ can have the same absolute abelian Galois group.

\begin{theorem}
\label{thm:h=1}
An imaginary quadratic number field $K\ne\Que(i), \Que(\sqrt{-2})$
of class number $1$
has absolute abelian Galois group isomorphic to
$$
G=\Zhat^2 \times \prod_{n\ge1} \Zee/n\Zee.
$$
\end{theorem}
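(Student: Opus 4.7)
The plan is to combine the exact sequence \eqref{eq:extension} with Theorem \ref{thm:U_K}. Since the hypothesis $h_K = 1$ makes the class group $\Cl_K$ trivial, the sequence
\[
1\to U_K \to A_K \mapright{\psi} \Cl_K \to 1
\]
collapses to an isomorphism $U_K \isar A_K$ of profinite groups, reducing the computation to an identification of $U_K$.

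Next I would invoke Theorem \ref{thm:U_K}. The only thing to verify is that $K$ falls in the non-exceptional range, i.e.\ that $K$ is neither $\Que(i)$ nor $\Que(\sqrt{-2})$, and this is exactly the present hypothesis. Recall from Section~\ref{S:inertialstructure} that no odd prime is ever exceptional for an imaginary quadratic field, and that the only imaginary quadratic fields for which $\ell=2$ is exceptional are precisely $\Que(i)$ and $\Que(\sqrt{-2})$. Hence Theorem \ref{thm:U_K} applies and gives
\[
U_K \iso \Zhat^2 \times \prod_{n\ge 1} \Zee/n\Zee,
\]
which combined with the isomorphism $A_K \iso U_K$ above yields the theorem.

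There is no serious obstacle: the structural analysis of the inertial part in Section \ref{S:inertialstructure} has already done all the work, and Theorem \ref{thm:h=1} is essentially a corollary of Theorem \ref{thm:U_K} in the trivial case of the idelic sequence \eqref{eq:extension}. The noteworthy content is the rigidity built into the answer -- the same profinite group $G$ arises independently of which class-number-one imaginary quadratic $K$ (outside the two exceptional fields) one chooses -- and this rigidity is already encoded in the $K$-independence of the right-hand side of Theorem \ref{thm:U_K}.
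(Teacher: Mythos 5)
Your proposal is correct and matches the paper's own (implicit) argument exactly: the paper derives this theorem immediately from the sequence \eqref{eq:extension}, which for $h_K=1$ identifies $A_K$ with $U_K$, together with the description of $U_K$ in Theorem \ref{thm:U_K}. No further comment is needed.
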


In Onabe's paper \cite[\S5]{\ONa}, the group $G$, which is not explicitly
given but characterized by its infinitely many Ulm invariants,
is referred to as `of type A'.
We will refer to $G$ as the \emph{minimal} Galois group, as every 
absolute abelian Galois group of an imaginary quadratic field 
$K\ne\Que(i), \Que(\sqrt{-2})$ contains a subgroup isomorphic to~$G$.
We will show that there are actually \emph{many} more $K$ having this
absolute abelian Galois group than the seven fields $K$ of class number 1
to which the preceding theorem applies.

Let us now take for $K$ any imaginary quadratic field of
class number $h_K>1$.
Then Theorem \ref{thm:U_K} and the sequence \eqref{eq:extension} show
that $A_K$ is an abelian group extension of $\Cl_K$ by the minimal
Galois group $G$ from Theorem \ref{thm:h=1}.
If the extension \eqref{eq:extension} were split, we would find
that $A_K$ is isomorphic to $G\times\Cl_K\iso G$.
However, it turns out that splitting at this level \emph{never}
occurs for nontrivial $\Cl_K$, in the following strong sense.

\begin{theorem}
\label{thm:nonsplit}
For every imaginary quadratic field $K$ of class number $h_K>1$,
the sequence \textup{\eqref{eq:extension}} is totally nonsplit\textup{;}
that is, there is no nontrivial subgroup $C\subset \Cl_K$ for which
the associated subextension $1\to U_K \to \psi^{-1}[C]\to C\to 1$ is split.
\end{theorem}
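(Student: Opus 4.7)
The plan is to derive a contradiction from a hypothetical splitting by producing an element $\alpha \in K^*$ that is a local $\ell$-th power at every finite place but represents a nontrivial ideal class of $K$, thereby violating the local-to-global principle for $\ell$-th powers.

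I would first reduce to prime-order elements: any nontrivial subgroup $C \subset \Cl_K$ contains an element $c$ of prime order $\ell$, and if the subextension splits by a section $s \colon C \to A_K$, then $\sigma := s(c) \in A_K$ has order exactly $\ell$. Lifting $\sigma$ to a finite idele $x = (x_\gothp)_\gothp \in \Khat^*$, the relation $\sigma^\ell = 1$ in $A_K = \Khat^*/K^*$ means that $x^\ell$ lies in the diagonal image of $K^*$, so there exists $\alpha \in K^*$ with $x_\gothp^\ell = \alpha$ in $K_\gothp^*$ for every finite prime $\gothp$. Thus $\alpha$ is an $\ell$-th power in every finite completion, and trivially also at the unique infinite place since $K_\infty = \CC$.

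The main step is then the local-to-global claim that $\alpha$ lies in $K^{*\ell}$. When $\zeta_\ell \in K$, this is Kummer theory plus Chebotarev: the unramified primes of $K$ that split completely in $K(\sqrt[\ell]{\alpha})/K$ are exactly those at which $\alpha$ is a local $\ell$-th power, a set that by hypothesis has density one, forcing $[K(\sqrt[\ell]{\alpha}):K]=1$. When $\zeta_\ell \notin K$, applying the previous case over $K' = K(\zeta_\ell)$ gives $\alpha \in (K')^{*\ell}$, so $K(\sqrt[\ell]{\alpha}) \subset K'$; since $[K(\sqrt[\ell]{\alpha}):K]$ divides both $\ell$ and $[K':K] \mid \ell-1$, it must equal $1$, so $\alpha \in K^{*\ell}$.

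Given $\alpha = \beta^\ell$ with $\beta \in K^*$, the contradiction is immediate: the fractional ideal $\gotha = \prod_\gothp \gothp^{\ord_\gothp(x_\gothp)}$ has class $\psi(\sigma) = c \neq 1$, yet $\gotha^\ell = (\alpha) = (\beta)^\ell$ forces $\gotha = (\beta)$ by comparing $\gothp$-adic valuations, making $c$ trivial. The hard part will be the local-to-global step, in view of the Wang-type exception to the naive Grunwald theorem that the introduction already warns about; restricting to a prime exponent $\ell$ should sidestep the issue entirely, since the Wang exception requires divisibility by $2^s$ with $s \geq 3$, a situation that cannot arise at the level of a single prime-order lift.
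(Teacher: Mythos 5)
Your proposal is correct and follows essentially the same route as the paper: pick a class of prime order $\ell$ in $C$, lift the image of a section to a finite idele, extract $\alpha\in K^*$ that is an $\ell$-th power in every completion and generates $\gotha^\ell$, and invoke the local--global principle for $\ell$-th powers to conclude $\gotha$ is principal. The only difference is that the paper simply cites \cite[Chapter~IX, Theorem~1]{\AT} for that principle, whereas you prove it via Kummer theory, Chebotarev, and descent from $K(\zeta_\ell)$ --- and your observation that the Grunwald--Wang special case cannot interfere at prime exponent is exactly right.
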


\begin{proof}
Suppose there is a non-trivial subgroup $C\subset \Cl_K$ over which
the extension~\eqref{eq:extension} splits, and pick
$[\gotha]\in C$ of prime order $p$.
Then there exists an element
$$
\left((x_\gothp)_\gothp\mod K^*\right)\in \psi^{-1}([\gotha])\subset A_K
=\Khat^*/K^*
$$
of order $p$.
In other words, there exists $\alpha\in K^*$ such that we have
$x_\gothp^p=\alpha\in K_\gothp^*$ for all $\gothp$,
and such that $\alpha$ generates the ideal $\gotha^p$.
But this implies by \cite[Chapter~IX, Theorem~1]{\AT} that
$\alpha$ is a $p$-th power in $K^*$, and hence that $\gotha$
is a principal ideal.
Contradiction.
\end{proof}

At first sight, Theorem \ref{thm:nonsplit}
seems to indicate that in the case $h_K>1$,
the group~$A_K$ will \emph{not} be isomorphic to 
the minimal Galois group $G\iso U_K$.
However, finite abelian groups requiring no more than $k$ generators
do allow extensions by free $\Zhat$-modules of finite rank $k$
that are again free of rank $k$,
just like they do with free $\Zee$-modules in the
classical setting of finitely generated abelian groups.
The standard example for $k=1$ is the extension
$$
1\to\Zhat\mapright{\times p\ }\Zhat\tto\Zee/p\Zee\to 1
$$
for an integer $p\ne0$, prime or not.
Applying to this the functor $\Hom(-, M)$ for a multiplicatively written
$\Zhat$-module $M$, we obtain an isomorphism
\begin{equation}
\label{eq:ext-z/nz}
M/M^p \isar \Ext(\Zee/p\Zee, M)
\end{equation}
by the Hom-Ext-sequence from homological algebra \cite{\MA}.
We will use it in Section~\ref{S:algorithm}.

\begin{lemma}
\label{lem:totnonsplit}
Let $B$ be a finite abelian group, $F$ a free $\Zhat$-module 
of finite rank $k$, and
$$
1\to F \tto E \tto B\to 1
$$
an exact sequence of $\Zhat$-modules.
Then $E$ is free of rank $k$ if and only if this sequence is totally nonsplit.
\end{lemma}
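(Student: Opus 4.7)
The plan is to reduce to the case of a single prime using $\Zhat=\prod_p\Zee_p$, and then to invoke the fact that $\Zee_p$ is a principal ideal domain. Taking primary decompositions, the sequence splits as a product over $p$ of sequences
$$
1\to \Zee_p^k\to E_p\to B_p\to 1,
$$
with $B_p$ the $p$-part of $B$ and $E_p=E\otimes_{\Zhat}\Zee_p$. Since $B$ is finite, every subgroup $C\subset B=\prod_p B_p$ is a product $\prod_p C_p$ of its projections, and the pullback decomposes accordingly; hence the global extension is totally nonsplit if and only if each local one is, and $E$ is free of rank $k$ over $\Zhat$ if and only if each $E_p$ is free of rank $k$ over $\Zee_p$. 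So I may and will assume $F=\Zee_p^k$ and that $B$ is a finite abelian $p$-group.

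For the implication from total nonsplitness to freeness, I would first apply the structure theorem over the PID $\Zee_p$ to write $E\iso \Zee_p^r\oplus T$ with $T$ a finite $p$-group. The inclusion $F\subset E$ with finite quotient $B$ forces $r=k$, so it suffices to show that $T$ vanishes. If $T$ were nonzero, I would pick $t\in T$ of order $p$; since $\langle t\rangle$ is torsion and $F$ is torsion-free, the intersection $\langle t\rangle\cap F$ would be trivial, so $\langle t\rangle$ would embed via $E\to B$ onto a nontrivial cyclic subgroup $C\subset B$. The inverse $C\iso \langle t\rangle\hookrightarrow E$ then provides a splitting of the subextension over $C$, contradicting total nonsplitness.

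Conversely, assume $E\iso\Zee_p^k$, and suppose some nontrivial $C\subset B$ lifts as a subgroup of $\psi^{-1}(C)\subset E$. Then $\psi^{-1}(C)$ would contain nontrivial torsion; but every submodule of the free $\Zee_p$-module $E$ is itself free, and hence torsion-free, a contradiction. The real content of the lemma therefore lies in the primary reduction, which allows us to replace the non-PID $\Zhat$ by the PID $\Zee_p$; once this is done, both implications are immediate from the structure theorem, and I anticipate no genuine obstacle beyond bookkeeping.
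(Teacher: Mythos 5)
Your proof is correct and takes essentially the same route as the paper, which simply says to reduce to the PID case by writing $\Zhat=\prod_p\Zee_p$ and considering the $p$-parts, leaving the rest as ``familiar.'' You have merely filled in the details of that reduction and of the structure-theorem argument over $\Zee_p$, including the degenerate case of trivial $B$, which the paper notes only as a convention.
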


\begin{proof}
One may reduce the statement
to the familiar case of modules over principal ideal
domains by writing $\Zhat=\prod_p \Zee_p$, and consider the individual
$p$-parts of the sequence.
As a matter of convention, note
that in the degenerate case where $B$ is the trivial group,
there are no nontrivial subgroups $C\subset B$ over which the 
sequence splits, making the sequence by definition totally nonsplit.
\end{proof}

In order to apply the preceding lemma,
we replace the extension \eqref{eq:extension}
by the pushout under the quotient map
$U_K=\Ocalhat^*/\mu_K \to U_K/T_K = \Ocalhat^*/T_K$
from $U_K$ to its maximal $\Zhat$-free quotient.
This yields the exact sequence of $\Zhat$-modules
\begin{equation}
\label{eq:pushout}
1\to \Ohat^*/T_K
\tto \Khat^*/(K^*\cdot T_K)
\tto \Cl_K \to 1
\end{equation}
in which $\Cl_K$ is finite and 
$\Ohat^*/T_K$ is free of rank 2 over $\Zhat$ by Lemma \ref{lemma:Ohatstar}.

\begin{theorem}
\label{thm:minG}
Let $K$ be an imaginary quadratic field of class number $h_K>1$, 
and suppose the sequence \textup{\eqref{eq:pushout}} is totally
nonsplit.  Then the absolute abelian Galois group of $K$ is
the minimal group~$G$ occurring in Theorem $\ref{thm:h=1}$.
\end{theorem}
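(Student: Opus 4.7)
My plan is to factor the description of $A_K$ through the pushout sequence \eqref{eq:pushout}, using the hypothesized total nonsplitness plus Lemma \ref{lem:totnonsplit} to identify the top of $A_K$ as a free $\Zhat$-module, and then to recover $A_K$ itself by splitting off the torsion part $T_K/\mu_K$.

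First, I would exhibit the correct exact sequence relating $A_K$ to the middle term of \eqref{eq:pushout}. Since $T_K$ is a closed subgroup of $\Ohat^*$ containing $\mu_K$, the subgroup $T_K/\mu_K$ sits inside $U_K \subset A_K$ and modding out yields a short exact sequence
$$
1 \to T_K/\mu_K \tto A_K = \Khat^*/K^* \tto \Khat^*/(K^* \cdot T_K) \to 1,
$$
whose quotient is precisely the middle term of \eqref{eq:pushout}. This sequence is the one I will ultimately split.

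Next, I apply Lemma \ref{lem:totnonsplit} to the pushout sequence \eqref{eq:pushout}. The kernel $\Ohat^*/T_K$ is free of rank $2$ over $\Zhat$ by Lemma \ref{lemma:Ohatstar} (since $[K:\Que]=2$), the cokernel $\Cl_K$ is finite, and \eqref{eq:pushout} is totally nonsplit by hypothesis. Lemma \ref{lem:totnonsplit} then gives
$$
\Khat^*/(K^* \cdot T_K) \iso \Zhat^2.
$$

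Now comes the key consequence: because $\Zhat^2$ is a free, hence projective, $\Zhat$-module, every extension of it by a profinite abelian group splits. Applied to the displayed sequence above, this yields
$$
A_K \iso \Zhat^2 \times (T_K/\mu_K).
$$
Finally, Theorem \ref{thm:U_K} identifies $T_K/\mu_K$ with $\prod_{n\ge 1} \Zee/n\Zee$ (the only input needed is that $K\ne\Que(i),\Que(\sqrt{-2})$, which is automatic here since those fields have $h_K=1$), so we conclude
$$
A_K \iso \Zhat^2 \times \prod_{n\ge 1} \Zee/n\Zee = G.
$$

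The main conceptual hurdle is just recognizing that one should not try to split the original sequence \eqref{eq:extension}—which is totally nonsplit by Theorem \ref{thm:nonsplit}—but instead pass to the pushout, where the hypothesis turns the obstruction into a feature: total nonsplitness of \eqref{eq:pushout} forces the $\Zhat$-free part of $A_K$ to grow to absorb $\Cl_K$, leaving behind a torsion complement isomorphic to $T_K/\mu_K$. Beyond that, the proof is bookkeeping with the two previous lemmas.
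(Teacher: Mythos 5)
Your proposal is correct and follows essentially the same route as the paper: apply Lemma \ref{lem:totnonsplit} to \eqref{eq:pushout} to see that $\Khat^*/(K^*\cdot T_K)$ is free of rank $2$ over $\Zhat$, then split the sequence $1\to T_K/\mu_K\to A_K\to \Khat^*/(K^*\cdot T_K)\to 1$ and invoke Theorem \ref{thm:U_K}. Your explicit justification of the splitting via projectivity of the free $\Zhat$-module, and the observation that $K\ne\Que(i),\Que(\sqrt{-2})$ is automatic from $h_K>1$, are details the paper leaves implicit but are entirely correct.
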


\begin{proof}
If the extension \textup{\eqref{eq:pushout}} is totally nonsplit, then
$\Khat^*/(K^*\cdot T_K)$ is free of rank 2 over $\Zhat$
by Lemma \ref{lem:totnonsplit}.
In this case the exact sequence of $\Zhat$-modules
$$
1\to T_K/\mu_K \tto A_K = \Khat^*/K^* \tto \Khat^*/(K^*\cdot T_K) \to 1
$$
is split, and $A_K$ is isomorphic to $U_K= G = \Zhat^2 \times (T_K/\mu_K)$.
\end{proof}

\begin{remark*}
We will use Theorem \ref{thm:minG} in this paper
to find many imaginary quadratic fields
$K$ having the same minimal absolute abelian Galois group $G$.
It is however interesting to note that this is the only way in
which this can be done, as Theorem~\ref{thm:minG} actually
admits a converse: If the absolute abelian Galois group of
an imaginary quadratic field $K$ of class number $h_K>1$ is
the minimal group $G$, then 
the sequence \eqref{eq:pushout} is totally nonsplit.
The proof, which we do not include in this paper, will be given in
the forthcoming doctoral thesis of the first author.
\end{remark*}

It is instructive to see what all the preceding extensions of
Galois groups amount to in terms of field extensions.
The diagram of fields 
in Figure~\ref{Fig:fields}
lists all subfields of the extension
$K\subset K^\ab$ corresponding to the various subgroups we considered
in analyzing the structure of $A_K=\Gal(K^\ab/K)$.

We denote by $H$ the Hilbert class field of $K$.
This is the maximal totally unramified abelian extension of $K$, and it 
is finite over $K$ with group $\Cl_K$.
The inertial part of $A_K$ is the Galois group
$U_K=\Gal(K^\ab/H)$, which is isomorphic to $G$ for all 
imaginary quadratic fields $K\ne\Que(i), \Que(\sqrt{-2})$.
The fundamental sequence \eqref{eq:extension} corresponds to the tower
of fields 
$$
K\subset H\subset K^\ab.
$$
By Theorem \ref{thm:U_K}, the invariant field $L$ of 
the closure $T_K/\mu_K$ of the torsion subgroup of $U_K$
is an extension of $H$ with group $\Zhat^2$.
The tower of field extensions 
$$
K\subset H\subset L
$$
corresponds to
the exact sequence of Galois groups \eqref{eq:pushout}.

We define $L_0$ as the `maximal $\Zhat$-extension' of $K$, that is, as the
compositum of the $\Zee_p$-extensions of $K$ for \emph{all} primes $p$.
As is well-known, an imaginary quadratic field admits two independent
$\Zee_p$-extensions for each prime $p$, so
$F=\Gal(L_0/K)$ is a free $\Zhat$-module of rank 2,
and $L_0$ is the invariant field under the closure $T_0$
of the torsion subgroup of $A_K$.
The image of the restriction map $T_0 \to \Cl_K$ is 
the maximal subgroup of $\Cl_K$ over which \eqref{eq:pushout} splits.
The invariant subfield of $H$ corresponding to it is
the intersection $L_0\cap H$.
The totally nonsplit case occurs when $H$ is contained in $L_0$,
leading to $L_0\cap H=H$ and $L_0=L$.
In this case $\Gal(L/K)=\Gal(L_0/K)$ is itself a free $\Zhat$-module of rank 2, and
$A_K$ is an extension of $\Zhat^2$ by $T_K/\mu_K$ that is 
isomorphic to $G$.

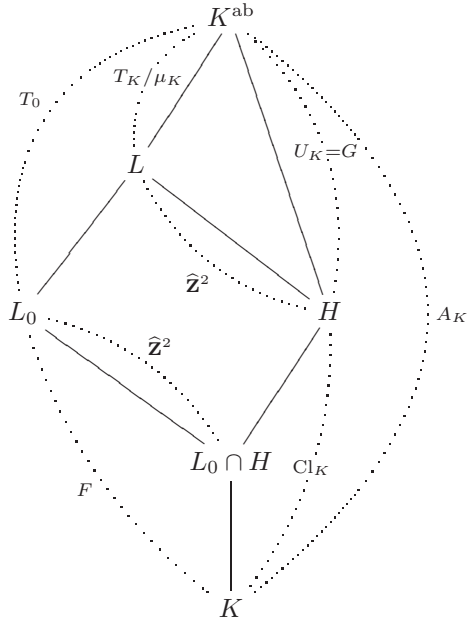
\begin{figure}[ht]
\begin{center}
$$
\xymatrixrowsep{0.25in}
\xymatrixcolsep{0.15in}
\xymatrix{
 & & & & &  & K^{\textrm{ab}}\ar@{-}[ddl] \ar@{-}[ddddr] \ar@/^1.5pc/@{.}[ddddr]|-{U_K=G} \ar@/^6.2pc/@{.}[dddddddd]^{A_K} \ar@/_1.2pc/@{.}[ddl]|-{T_K/\mu_K} \ar@/_3pc/@{.}[ddddlll]_{T_{0}}  &  & & \\
 & & & & &  & & & & &  & \\
 & & & & & L\ar@{-}[ddll] \ar@{-}[ddrr] \ar@/_1pc/@{.}[ddrr]_{\Zhat^{2}} &  & & &  \\
 & & & & & & & & & & \\
 & & & L_{0}\ar@{-}[ddrrr] \ar@/^1pc/@{.}[ddrrr]^{\Zhat^{2}} \ar@/_1pc/@{.}[ddddrrr]_{F} & &   & & H\ar@{-}[ddl] \ar@/^1pc/@{.}[ddddl]|-{\textrm{Cl}_{K}} & \\
 & & & & &  & & & & & \\
 & & & & & & L_{0}\cap H  \ar@{-}[dd] & & & \\
 & & & & & & & & & & \\
 & & & & & & K & & & \\
}$$
\caption{The structure of $A_K = \Gal(K^{\textrm{ab}}/K)$.}
\label{Fig:fields}
\end{center}
\end{figure}

\section{Finding minimal Galois groups}
\label{S:algorithm}
In order to use Theorem \ref{thm:minG}
and find imaginary quadratic $K$ for which the 
absolute abelian Galois group $A_K$ is the minimal group $G$
from Theorem \ref{thm:h=1},
we need an algorithm that can effectively
determine, on input $K$, whether the sequence of $\Zhat$-modules
$$
1\to \Ohat^*/T_K
\tto \Khat^*/(K^*\cdot T_K)
\tto \Cl_K \to 1
\leqno{\eqref{eq:pushout}}
$$
from Section \ref{S:extensions} is totally nonsplit.
This means that for every ideal class $[\gotha]\in\Cl_K$
of prime order, the 
subextension $E[\gotha]$ of \eqref{eq:pushout} lying over 
the subgroup $\langle[\gotha]\rangle\subset\Cl_K$ is nonsplit.

Any profinite abelian group $M$ is a module
over $\Zhat=\prod_p \Zee_p$, and can be written accordingly as a 
product $M=\prod_p M_p$ of $p$-primary parts, where
$M_p=M\otimes_\Zhat \Zee_p$ is a pro-$p$-group and $\Zee_p$-module.
In the same way, an exact sequence of $\Zhat$-modules is
a `product' of exact sequences for their $p$-primary parts, and
splitting over a group of prime order $p$ only involves 
$p$-primary parts for that $p$.

For the free $\Zhat$-module $M=\Ohat^*/T_K$ in \eqref{eq:pushout},
we write $T_p$ for the torsion subgroup of
$\Ocal_p^*=(\Ocal\otimes_\Zee \Zee_p)^*=\prod_{\gothp\mid p} \Ocal_\gothp^*$.
Then the $p$-primary part of $M$
is the pro-$p$-group
\begin{equation}
\label{eq:O_p/T_p}
M_p=\Ocal_p^*/T_p = \prod_{\gothp\mid p} (\Ocal_\gothp^*/T_\gothp) \iso \Zee_p^2.
\end{equation}
In order to verify the hypothesis of Theorem \ref{thm:minG},
we need to check that the extension $E[\gotha]$ has nontrivial
class in $\Ext(\langle[\gotha]\rangle, M)$ for all
$[\gotha]\in\Cl_K$ of prime order~$p$.
We can do this by verifying in each case that the element of $M/M^p=M_p/M_p^p$
corresponding to it under the isomorphism~\eqref{eq:ext-z/nz}
is nontrivial.
This yields the following theorem.

\begin{theorem}
\label{thm:criterion}
Let $K$ be an imaginary quadratic field, and define for each prime 
number $p$ dividing $h_K$ the homomorphism
$$
\phi_p: \Cl_K[p]\tto \Ocal_p^*/T_p (\Ocal_p^*)^p
$$
that sends the class of a $p$-torsion ideal $\gotha$ coprime to $p$
to the class of a generator of the ideal $\gotha^p$.
Then \textup{\eqref{eq:pushout}} is totally nonsplit if and only if
all maps $\phi_p$ are injective.
\end{theorem}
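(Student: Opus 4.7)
The plan is to identify, for each nonzero $[\gotha]\in\Cl_K[p]$, the extension class of the pullback $E[\gotha]$ of \eqref{eq:pushout} over $\langle[\gotha]\rangle\cong\Zee/p\Zee$ with $\phi_p([\gotha])$ under the isomorphism \eqref{eq:ext-z/nz}. Then injectivity of every $\phi_p$ corresponds precisely to nonsplittingness of every subextension over a cyclic subgroup of prime order, which in turn is equivalent to total nonsplittingness.

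First I would reduce to subgroups of prime order: any nontrivial subgroup $C\subset\Cl_K$ contains a subgroup of prime order, and a splitting of \eqref{eq:pushout} over $C$ restricts to a splitting over each of its subgroups. Hence \eqref{eq:pushout} is totally nonsplit if and only if, for every prime $p\mid h_K$ and every nonzero $[\gotha]\in\Cl_K[p]$, the subextension $E[\gotha]$ is nonsplit.

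Next I would pass to $p$-primary parts. The kernel decomposes as $\Ohat^*/T_K=\prod_\ell (\Ocal_\ell^*/T_\ell)$, and the factor at $\ell\ne p$ is a $\Zee_\ell$-module on which multiplication by $p$ is invertible. Combining this with \eqref{eq:ext-z/nz} gives
$$
\Ext(\Zee/p\Zee,\ \Ohat^*/T_K)\cong \Ocal_p^*/T_p(\Ocal_p^*)^p.
$$
The key computation is to match this Ext class with $\phi_p([\gotha])$. Choose $\gotha$ in its class with support coprime to $p$, and pick an idele $(x_\gothp)_\gothp\in\Khat^*$ equal to a local uniformizer raised to $\ord_\gothp(\gotha)$ at $\gothp\nmid p$ and equal to $1$ at $\gothp\mid p$; its class lifts $[\gotha]$ in \eqref{eq:pushout}. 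Writing $\gotha^p=(\alpha)$ with $\alpha\in K^*$ (automatically a $p$-unit, so $\alpha\in\Ocal_p^*$), the idele $(x_\gothp)^p\alpha^{-1}$ lies in $\Ohat^*$, represents $(x_\gothp)^p$ modulo $K^*$, and has $p$-primary component equal to $\alpha^{-1}$. Via \eqref{eq:ext-z/nz}, the class of $E[\gotha]$ is therefore $\phi_p([\gotha])^{-1}$ in $\Ocal_p^*/T_p(\Ocal_p^*)^p$. A routine check shows $\phi_p$ is well-defined: altering $\alpha$ by a unit in $\Ocal^*=\mu_K\subset T_p$, or replacing $\gotha$ by $\gotha(\beta)$ with $\beta$ coprime to $p$ (so $\beta\in\Ocal_p^*$), changes $\alpha$ only by an element of $T_p(\Ocal_p^*)^p$.

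Combining these steps, $E[\gotha]$ is split if and only if $\phi_p([\gotha])=0$, so \eqref{eq:pushout} is totally nonsplit if and only if every $\phi_p$ has trivial kernel on $\Cl_K[p]$, i.e.\ is injective. I expect the main subtlety to lie in the third step: one must track the connecting homomorphism of the Hom-Ext sequence and the formation of the idele $(x_\gothp)^p\alpha^{-1}$ carefully, so that the extension class really equals $\phi_p([\gotha])^{\pm1}$ in the quotient $\Ocal_p^*/T_p(\Ocal_p^*)^p$.
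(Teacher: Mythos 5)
Your proposal is correct and follows essentially the same route as the paper: both identify the class of the subextension $E[\gotha]$ in $\Ext(\Zee/p\Zee,\Ohat^*/T_K)\cong M/M^p=\Ocal_p^*/T_p(\Ocal_p^*)^p$ via the Hom--Ext isomorphism \eqref{eq:ext-z/nz} and an explicit idele lifting $[\gotha]^{\pm1}$, reducing total nonsplitness to injectivity of the maps $\phi_p$. The only (immaterial) difference is a sign: the paper identifies $1\bmod p\Zee$ with $[\gotha]^{-1}$ so that the extension class is $\phi_p([\gotha])$ itself, whereas you lift $[\gotha]$ directly and obtain $\phi_p([\gotha])^{-1}$.
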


\begin{proof}
Under the isomorphism \eqref{eq:ext-z/nz}, the class of the
extension
$$
1\to M\tto E\mapright{f}\Zee/p\Zee\to 1
$$
in $\Ext(\Zee/p\Zee, M)$ corresponds 
by \cite[Chapter III, Proposition~1.1]{\MA} to the residue
class of the element $(f^{-1}(1\mod p\Zee))^p\in M/M^p$.
In the case of $E[\gotha]$, we apply this to
$M=\Ocalhat^*/T_K$, and choose the identification
$\Zee/p\Zee=\langle[\gotha]\rangle$ under which $1\mod p\Zee$ is
the \emph{inverse} of $[\gotha]$.
Then $f^{-1}(1\mod p\Zee)$ is the residue class in
$\Khat^*/(K^*\cdot T_K)$ of any finite idele $x\in\Khat^*$
that is mapped to ideal class of $\gotha^{-1}$
under the map $\psi$ from \eqref{eq:extension}.

We pick $\gotha$ in its ideal class coprime to $p$,
and take for $x=(x_\gothp)_\gothp$ an idele that locally
generates $\gotha^{-1}$ at all $p$.
If $\alpha\in K^*$ generates $\gotha^p$,
then $x^p\alpha$ is an idele in $\Ocalhat^*$ that lies 
in the same class modulo $K^*$ as $x^p$, and its image 
$$
(f^{-1}(1\mod p\Zee))^p=x^p=x^p\alpha
\in M/M^p=M_p/M_p^p=\Ocal_p^*/T_p (\Ocal_p^*)^p
$$
corresponds to the class of $E[\gotha]$ in
$\Ext(\langle[\gotha]\rangle, \Ocal^*/T_K)$.
As the idele $x=(x_\gothp)_\gothp$ has components $x_\gothp\in\Ocal_\gothp^*$
at $\gothp\mid p$ by the choice of $\gotha$, we see that this image
in $M_p/M_p^p=\Ocal_p^*/T_p (\Ocal_p^*)^p$
is the element $\phi_p([\gotha])$ we defined.
The map $\phi_p$ is clearly a homomorphism, and we want it to assume nontrivial
values on the elements of order $p$ in $\Cl_K[p]$, for each prime
$p$ dividing $h_K$.
The result follows.
\end{proof}

\begin{remark*}
In Theorem~\ref{thm:criterion}, it is not really necessary to restrict to
representing ideals $\gotha$ that are coprime to $p$.
One may take $K_p^*/T_p(K_p^*)^p$ as the target space of $\phi_p$
to accommodate all $\gotha$, with $K_p=K\otimes_\Zee \Zee_p$,
and observe that the image of $\phi_p$ is in the subgroup
$\Ocal_p^*/T_p (\Ocal_p^*)^p$, as the valuations of $\gotha^p$
at the primes over $p$ are divisible by~$p$.
\end{remark*}

\begin{remark*}
It is possible to prove Theorem \ref{thm:criterion} without explicit
reference to homological algebra.
What the proof shows is that, in order to lift an ideal class of
arbitrary order $n$ under \eqref{eq:pushout}, it is necessary and sufficient
that its $n$-th power is generated by an element $\alpha$ that is
locally everywhere a $n$-th power \emph{up to multiplication by
local roots of unity.}
This extra leeway in comparison with the situation in
Theorem \ref{thm:nonsplit} makes it into an interesting 
splitting problem for the group extensions involved, as
this condition on $\alpha$ may or may not be satisfied.
Note that at primes outside $n$,
the divisibility of the valuation of $\alpha$ by $n$ automatically
implies the local condition.

In Onabe's paper, which assumes throughout that $\Cl_K$ itself
is a cyclic group of prime order, the same criterion is obtained
from an analysis of the Ulm invariants occurring in Kubota's
setup \cite{\KU}.
\end{remark*}

Our Theorem \ref{thm:criterion} itself does not assume any
restriction on $\Cl_K$, but its use in finding $K$ with 
minimal absolute Galois group $G$ does imply certain
restrictions on the structure of $\Cl_K$.
The most obvious implication of the injectivity of the map
$\phi_p$ in the theorem is a bound on the
$p$-rank of $\Cl_K$, which is defined as the dimension
of the group $\Cl_K/\Cl_K^p$ as an $\FF_p$-vector space.

\begin{corollary}
\label{cor:rank3}
If\/ $\Cl_K$ has $p$-rank at least $3$ for some $p$, then
the sequence \textup{\eqref{eq:pushout}} splits over a subgroup of
$\Cl_K$ of order $p$.
\end{corollary}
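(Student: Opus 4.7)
The plan is to apply Theorem \ref{thm:criterion} directly: the sequence \eqref{eq:pushout} is totally nonsplit iff the maps $\phi_p$ are all injective, so it suffices to show that when the $p$-rank of $\Cl_K$ is at least $3$, the map $\phi_p$ cannot be injective, and moreover that a kernel element of order $p$ gives rise to a splitting over the cyclic subgroup it generates.

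First I would compare dimensions on either side of the homomorphism
$$
\phi_p: \Cl_K[p]\tto \Ocal_p^*/T_p (\Ocal_p^*)^p.
$$
The domain $\Cl_K[p]$ is an $\FF_p$-vector space whose dimension equals the $p$-rank of $\Cl_K$, since $\Cl_K$ is a finite abelian group. On the codomain side, the local identity \eqref{eq:O_p/T_p} tells us that $\Ocal_p^*/T_p$ is free of rank $2$ over $\Zee_p$, so reducing modulo $p$ gives
$$
\dim_{\FF_p}\bigl(\Ocal_p^*/T_p (\Ocal_p^*)^p\bigr)=2.
$$
Hence, if the $p$-rank of $\Cl_K$ is at least $3$, there is no room for $\phi_p$ to be injective.

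Next I would pick a nonzero element $[\gotha]\in\ker\phi_p$. Such $[\gotha]$ has order exactly $p$ in $\Cl_K$, so it generates a subgroup $C=\langle[\gotha]\rangle\subset\Cl_K$ of order $p$. The proof of Theorem \ref{thm:criterion} identifies $\phi_p([\gotha])$ with the class in $\Ext(C,\Ohat^*/T_K)$ of the subextension $E[\gotha]$ of \eqref{eq:pushout} lying over $C$, via the isomorphism \eqref{eq:ext-z/nz}. Since this class vanishes, the subextension $E[\gotha]$ is split; that is, \eqref{eq:pushout} splits over the order-$p$ subgroup $C$, as required.

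There is essentially no hard step here — the corollary is a dimension-count consequence of Theorem \ref{thm:criterion}. The one point to be slightly careful about is keeping track that the cokernel obstruction really does give a splitting over a specific cyclic subgroup of order $p$, rather than merely the negative statement that \eqref{eq:pushout} fails to be totally nonsplit; but this is immediate from the explicit identification of $\phi_p([\gotha])$ with the extension class of $E[\gotha]$ provided in the proof of Theorem \ref{thm:criterion}.
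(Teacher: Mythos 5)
Your proposal is correct and follows essentially the same route as the paper: the codomain of $\phi_p$ is, by \eqref{eq:O_p/T_p}, an $\FF_p$-vector space of dimension $2$, so $p$-rank $\ge 3$ forces a nontrivial kernel, and Theorem \ref{thm:criterion} does the rest. Your extra care in noting that a kernel element of order $p$ yields a splitting over the specific cyclic subgroup it generates (rather than just the negation of ``totally nonsplit'') is a slight sharpening of the paper's one-line conclusion, but it rests on the same identification from the proof of Theorem \ref{thm:criterion}.
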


\begin{proof}
It follows from the isomorphism in 
\eqref{eq:O_p/T_p} that the image of $\phi_p$ lies in
a group that is isomorphic to $(\Zee/p\Zee)^2$.
If $\Cl_K$ has $p$-rank at least $3$, then $\phi_p$ will not
be injective.
Now apply Theorem \ref{thm:criterion}.
\end{proof}

As numerical computations in uncountable \hbox{$\Zhat$-modules} such 
as $\Khat^*/(K^*\cdot T_K)$ can only be performed
with finite precision, it is not immediately obvious that the 
splitting type of an idelic extension as \eqref{eq:pushout}
can be found by a finite computation.
The maps $\phi_p$ in Theorem \ref{thm:criterion} however
are linear maps between finite-dimensional $\FF_p$-vector spaces that
lend themselves very well to explicit computations.
One just needs some standard algebraic number theory to compute
these spaces explicitly.
A high-level description of an algorithm that 
determines whether the extension \eqref{eq:pushout} is 
totally nonsplit is then easily written down.

\begin{algorithm}
\label{Alg}
\begin{alglist}
\algin An imaginary quadratic number field $K$.
\algout \emph{No} if the extension \textup{\eqref{eq:pushout}} for $K$
is not totally nonsplit, \emph{yes} otherwise.
\item Compute the class group $\Cl_K$ of $K$. 
           If $\Cl_K$ has $p$-rank at least $3$ for some~$p$,
           output \emph{no} and stop.
\item For each prime $p$ dividing $h_K$,
compute $n\in\{1,2\}$ $\Ocal$-ideals 
coprime to $p$ such that their classes in $\Cl_K$ generate $\Cl_K[p]$,
and generators $x_1$ up to $x_n$ for their $p$-th powers. 

Check whether $x_1$ is trivial in 
$\Ocal_{(p)}^*/T_K(\Ocal_{(p)}^*)^p$.
If it is, output \emph{no} and stop.
If $n=2$, check whether $x_2$ is trivial in 
$\Ocal_p^*/T_K\cdot\langle x_1\rangle\cdot(\Ocal_p^*)^p$.
If it is, output \emph{no} and stop.
\item If all primes $p\mid h_K$ are dealt with without
stopping, output \emph{yes} and stop.
\end{alglist}
\end{algorithm}

Step 1 is a standard task in computational algebraic
number theory.
For imaginary quadratic fields, it is often implemented
in terms of binary quadratic forms, and particularly easy.
From an explicit presentation of the group, 
it is also standard to find the global
elements $x_1$ and, if needed, $x_2$.
The rest of Step 2 takes place in a \emph{finite} group,
and this means that we only compute in the 
rings $\Ocal_p$ up to small precision.
For instance, computations in $\Zee_p^*/T_p(\Zee_p^*)^p$
amount to computations modulo~$p^2$ for odd~$p$, and modulo
$p^3$ for $p=2$.

\section{Splitting behavior at 2}
\label{S:splittingat2}
The splitting behavior of the sequence \eqref{eq:pushout}
depends strongly on the structure of the $p$-primary parts of
$\Cl_K$ at the primes $p\mid h_K$.
In view of Theorem \ref{thm:criterion} and Corollary~\ref{cor:rank3},
fields with cyclic class groups and few small primes dividing $h_K$
appear to be more likely to have minimal Galois group $G$.
In Section \ref{S:computations}, we will provide numerical data
to examine the average splitting behavior.

For odd primes $p$, class groups of $p$-rank at least 3
arising in Corollary \ref{cor:rank3} are very rare, at
least numerically and according to the Cohen-Lenstra heuristics.
At the prime 2, the situation is a bit different, as the 
2-torsion subgroup of $\Cl_K$ admits a classical explicit
description going back to Gauss.
Roughly speaking, his theorem on ambiguous ideal classes states
that $\Cl_K[2]$ is an $\FF_2$-vector space generated by the classes
of the primes $\gothp$ of $K$ lying over the rational primes that
ramify in $\Que\subset K$, subject to a single relation coming from
the principal ideal $(\sqrt {D_K})$.
Thus, the \makebox{2-\hfill rank} of $\Cl_K$ for a discriminant with $t$ distinct prime
divisors equals $t-1$.
In view of Corollary \ref{cor:rank3}, our method to construct $K$
with absolute abelian Galois group $G$ does not apply if the discriminant
$D_K$ of $K$ has more than~3 distinct prime divisors.

If $-D_K$ is a prime number, then $h_K$ is odd, and there is
nothing to check at the prime~$2$.

For $D_K$ with two distinct prime divisors, the 2-rank of $\Cl_K$ equals 1,
and we can replace the computation at $p=2$ in Algorithm \ref{Alg} by
something that is much simpler.

\begin{theorem}
\label{thm:2rank1}
Let $K$ be an imaginary quadratic field with even class number, and suppose
that its $2$-class group is cyclic.
Then the sequence \textup{\eqref{eq:pushout}} is nonsplit over $\Cl_K[2]$
if and only if the discriminant $D_K$ of $K$ is of one of the
following types\textup{:}
\begin{enumerate}
\item $D_K=-pq$ for primes $p\congr -q\congr 5\mod 8$\textup{;}
\item $D_K=-4p$ for a prime $p\congr 5\mod8$\textup{;}
\item $D_K=-8p$ for a prime $p\congr \pm3\mod8$.
\end{enumerate}
\end{theorem}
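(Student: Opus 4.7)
The plan is to apply Theorem~\ref{thm:criterion} at the prime $p = 2$: the sequence \eqref{eq:pushout} is nonsplit over $\Cl_K[2]$ if and only if the map
\[
\phi_2 \colon \Cl_K[2] \tto \Ocal_2^*/T_2(\Ocal_2^*)^2
\]
is injective. The cyclicity hypothesis, combined with Gauss's theorem on ambiguous ideal classes recalled just before the statement (the 2-rank of $\Cl_K$ equals $t-1$ where $t$ is the number of primes dividing $D_K$), forces $t = 2$, so $D_K$ has exactly one of the three shapes $-pq$, $-4p$, $-8p$. Since $\Cl_K[2] \iso \Zee/2\Zee$, injectivity of $\phi_2$ just means nonvanishing on the unique class of order~$2$.

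First I would identify a generator of $\Cl_K[2]$ and compute its $\phi_2$-image in each case. For $D_K = -pq$, both ramified primes $\gothp, \gothq$ represent the nontrivial class, and $\phi_2$ sends it to the image of the integer $p$ (or $q$) in the target. For $D_K = -4p$, the prime $\gothp = (\sqrt{-p})$ is principal, so the generator is $[\gothq]$ with $\gothq\mid 2$; invoking the first remark after Theorem~\ref{thm:criterion} to allow $\gothq$ as representative, I get the image of $2 \in K_2^*$ (its $K_2$-valuation is even, so the class does lie in $\Ocal_2^*/T_2(\Ocal_2^*)^2$). For $D_K = -8p$, both $\gothp$ and $\gothq$ are nonprincipal but equivalent in $\Cl_K$, so using $\gothp$ again gives the image of $p$.

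The remaining work is local: identify $K_2 = K \otimes_\Que \Que_2$ as a specific quadratic extension of $\Que_2$ from its square-class representative in $\Que_2^*/(\Que_2^*)^2$, compute $T_2 = \mu(K_2)$ (equal to $\mu_2$ generically, $\mu_4$ when $K_2 = \Que_2(i)$, and $\mu_6$ when $K_2$ is unramified), and decide whether the relevant element lies in $T_2 (K_2^*)^2$. The deciding criterion is that $\alpha \in \Que_2^*$ is a square in $K_2^*$ exactly when $\Que_2(\sqrt{\alpha}) \subseteq K_2$, a comparison of square classes. A useful preliminary observation: if $2$ is inert in $K$ then $K_2$ is the unramified quadratic extension, which already contains $\sqrt{5}$, making $5$ a square; combined with $-1 \in T_2$ and the fact that $\{\pm 1,\pm 5\}$ exhausts $\Zee_2^*/(\Zee_2^*)^2$, this shows that every $p \in \Zee_2^*$ is trivial modulo $T_2 (\Ocal_2^*)^2$. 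So the $2$-inert subcase of Case~1 cannot contribute, and nonsplitness is confined to the $2$-split and $2$-ramified configurations.

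I expect the main obstacle to be the bookkeeping in Case~3, where each of the four residues of $p \pmod 8$ yields a different ramified quadratic $K_2$ (namely $\Que_2(\sqrt{-2}), \Que_2(\sqrt{10}), \Que_2(\sqrt{-10}), \Que_2(\sqrt{2})$ for $p \equiv 1, 3, 5, 7 \pmod 8$ respectively), and for each residue I must verify whether $p$ or $-p$ becomes a square in $K_2^*$. Carrying this through singles out $p \equiv \pm 3 \pmod 8$. The remaining finishing computations are analogous but shorter: in Case~1 the $2$-split condition $D_K \equiv 1 \pmod 8$ restricts the residues of $(p,q)$, and the diagonal image $(p, p) \in (\Zee_2^*)^2 / (\{\pm 1\}^2 (\Zee_2^*)^2)$ is nontrivial iff $p \not\equiv \pm 1 \pmod 8$, together yielding $p \equiv -q \equiv 5 \pmod 8$; in Case~2 the identification $K_2 = \Que_2(i)$ for $p \equiv 1 \pmod 8$ combined with $2 = -i(1+i)^2 \in T_2(K_2^*)^2$ rules out that subcase, while the identification $K_2 = \Que_2(\sqrt{-5})$ for $p \equiv 5 \pmod 8$ combined with the fact that neither $\pm 2$ is a square in $K_2^*$ (since $\Que_2(\sqrt{\pm 2}) \ne \Que_2(\sqrt{-5})$) gives the remaining condition.
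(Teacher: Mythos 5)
Your proposal is correct and follows essentially the same route as the paper: reduce to the injectivity of $\phi_2$ via Theorem~\ref{thm:criterion}, represent the order-$2$ class by ramified primes using genus theory, and decide membership in $T_2(\Ocal_2^*)^2$ by comparing square classes in the completion at~$2$. The only cosmetic difference is that where you argue directly that the unramified quadratic extension of $\Que_2$ contains $\sqrt{5}$ (disposing of the $2$-inert subcase), the paper invokes the genus field $\Que(\sqrt p,\sqrt{-q})$; the remaining case-by-case local computations agree.
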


\begin{proof}
If $K$ has a nontrivial cyclic 2-class group, then $D_K\congr 0, 1\mod 4$
is divisible by exactly two different primes.

If $D_K$ is odd, we have $D_K=-pq$ for primes
$p\congr 1\mod 4$ and $q\congr 3\mod 4$, and the ramified
primes $\gothp$ and $\gothq$ of $K$ are in the unique ideal class
of order 2 in $\Cl_K$.
Their squares are ideals generated by the integers $p$ and $-q$
that become squares in the genus field $F=\Que(\sqrt p, \sqrt{-q})$
of $K$, which is a quadratic extension of $K$ with group $C_2\times C_2$
over $\Que$ that is locally unramified at 2.

If we have $D_K\congr 5\mod 8$, then 2 is inert in $\Que\subset K$, and
2 splits in $K\subset F$.
This means that $K$ and $F$ have isomorphic completions at
their primes over 2, and that $p$ and $-q$ are local squares at 2.
In this case $\phi_2$ is the trivial map
in Theorem~\ref{thm:criterion}, and is not injective.

If we have $D\congr 1\mod 8$ then 2 splits in $\Que\subset K$.
In the case  $p\congr -q\congr 1\mod 8$ the integers
$p$ and $-q$ are squares in $\Zee_2^*$, and $\phi_2$ is again the
trivial map.
In the other case $p\congr -q\congr 5\mod 8$, the generators
$p$ and $-q$ are nonsquares in $\Zee_2^*$, also up to multiplication
by elements in $T_2=\{\pm1\}$.
In this case $\phi_2$ is injective.

If $D_K$ is even, we either have $D_K=-4p$ for a prime 
$p\congr 1\mod 4$ or $D_K=-8p$ for an odd prime $p$.
In the case $D_K=-4p$ the ramified prime over 2 is in the ideal class
of order 2.
For $p\congr 1\mod 8$, the local field $\Que_2(\sqrt{-p})=\Que_2(i)$
contains a square root of $2i$, and $\phi_2$ is not injective.
For $p\congr 5\mod 8$, the local field $\Que_2(\sqrt{-p})=\Que_2(\sqrt 3)$
does not contain a square root of $\pm 2$, and $\phi_2$ is injective.
In the case $D_K=-8p$ the ramified primes over both 2 and $p$ are in the
ideal class of order 2.
For $p\congr\pm1\mod 8$ the generator $\pm p$ is a local square at 2.
For $p\congr\pm3\mod 8$ it is not.
\end{proof}

In the case where the 2-rank of $\Cl_K$ exceeds 1, the situation
is even simpler.

\begin{theorem}
\label{thm:2rank2}
Let $K$ be an imaginary quadratic field for which the $2$-class group
is noncyclic.
The the map $\phi_2$ in Theorem $\ref{thm:criterion}$
is not injective.
\end{theorem}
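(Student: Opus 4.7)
The plan is to split according to the $2$-rank of $\Cl_K$.

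If the $2$-rank of $\Cl_K$ is at least $3$, then Corollary~\ref{cor:rank3} settles the matter immediately: by \eqref{eq:O_p/T_p} the target $\Ocal_2^*/T_2(\Ocal_2^*)^2$ is a $2$-dimensional $\FF_2$-vector space, which is too small to receive an injection from $\Cl_K[2]$.

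The substantive case is when the $2$-rank equals exactly $2$, so that $D_K$ has three distinct prime divisors $p_1, p_2, p_3$. By Gauss's theorem on ambiguous classes (recalled in the discussion before Theorem~\ref{thm:2rank1}), $\Cl_K[2]$ is then the $2$-dimensional $\FF_2$-vector space generated by the classes $[\gothp_1], [\gothp_2], [\gothp_3]$ of the three ramified primes, modulo the single relation $[\gothp_1][\gothp_2][\gothp_3] = 1$ coming from the principal ideal $(\sqrt{D_K})$. My key claim is that the image of $\phi_2$ is contained in the image $R$ of the natural composite $\Zee_2^* \hookrightarrow \Ocal_2^* \twoheadrightarrow \Ocal_2^*/T_2(\Ocal_2^*)^2$, and that $R$ has $\FF_2$-dimension at most $1$.

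To show containment in $R$, I take the coprime representative $\gotha = \gothp_i$ whenever $p_i$ is odd: since $\gothp_i^2 = (p_i)$ with $p_i \in \Zee_2^*$, this gives $\phi_2([\gothp_i]) = [p_i] \in R$. If one of the ramified primes, say $\gothp_1$, lies above $2$, I use the ambiguous-class relation to rewrite $[\gothp_1] = [\gothp_2][\gothp_3]$ and represent it by the coprime-to-$2$ ideal $\gothp_2\gothp_3$, whose square is generated by $p_2 p_3 \in \Zee$, again giving $\phi_2([\gothp_1]) = [p_2 p_3] \in R$.

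For the dimension bound on $R$, the map $\Zee_2^* \to \Ocal_2^*/T_2$ kills $\{\pm 1\} \subset T_2$ and therefore factors through the free $\Zee_2$-module $\Zee_2^*/\{\pm 1\}$ of rank $1$, which maps into the free $\Zee_2$-module $\Ocal_2^*/T_2$ of rank $[K:\Que]=2$ (Lemma~\ref{lemma:Ohatstar}); its image is a $\Zee_2$-submodule of rank at most $1$, so reducing mod squares yields $\dim_{\FF_2} R \leq 1$. Hence the $2$-dimensional space $\Cl_K[2]$ cannot inject into $R$, and $\phi_2$ has a nontrivial kernel. The only point needing care is the bookkeeping when one of the ramified primes lies above $2$; after that, the proof reduces to a clean local rank count at~$2$.
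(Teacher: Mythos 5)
Your proof is correct and follows essentially the same route as the paper's: both observe that every $2$-torsion class is represented by (a product of) ramified primes whose square is generated by a rational integer, so the image of $\phi_2$ lands in the image of $\Zee_2^*$ in $\Ocal_2^*/T_2(\Ocal_2^*)^2$, a group of order at most $2$, which cannot receive an injection from a noncyclic $\Cl_K[2]$. Your extra bookkeeping for a ramified prime above $2$ (handled via the genus relation) and the explicit rank count for the image of $\Zee_2^*$ are details that the paper's three-line proof leaves implicit; the initial case split on the $2$-rank is harmless but unnecessary, since the same containment argument works for any rank at least $2$.
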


\begin{proof}
As every 2-torsion element in $\Cl_K$ is the class of a ramified prime
$\gothp$, its square can be generated by a rational prime number.
This implies that the image of $\phi_2$ is contained in the 
cyclic subgroup 
$$
\Zee_2^*/\{\pm1\}(\Zee_2^*)^2 \subset 
\Ocalhat^*/T_2(\Ocalhat^*)^2
$$
of order 2.
Thus $\phi_2$ is not injective if $\Cl_K$ has noncyclic 2-part.
\end{proof}

In view of Theorem \ref{thm:minG} and the remark following it,
imaginary quadratic fields $K$ for which $A_K$ is the minimal 
Galois group from Theorem~\ref{thm:h=1} can only be found among 
those $K$ for which $-D_K$ is prime, or in the infinite families 
from Theorem~{thm:2rank1}.  In the next section, we will find
many of such $K$.

\section{Computational results}
\label{S:computations}
In Onabe's paper \cite{\ONa}, only cyclic class groups $\Cl_K$ of
prime order $p\le 7$ are considered.
In this case there are just 2 types of splitting behavior
for the extension~\eqref{eq:pushout}, and Onabe provides a list
of the first few $K$ with $h_K=p\le 7$, together with the type of
splitting they represent.
For $h_K=2$ the list is in accordance with Theorem~\ref{thm:2rank1}.
In the cases $h_k=3$ and $h_K=5$ there are only 2 split examples against
10 and 7 nonsplit examples, and for $h_K=7$ no nonsplit examples are
found.
This suggests that $\phi_p$ is rather likely to be injective for
increasing values of $h_K=p$.

This belief is confirmed if we extend Onabe's list by including
\emph{all} imaginary quadratic $K$ of odd prime class number
$h_K=p<100$.
By the work of Watkins~\cite{\WA}, we now know, much more precisely
than Onabe did, what the exact list of fields with given small
class number looks like.
The extended list, with the 65 out of 2356 cases in which the extension
\eqref{eq:pushout}
splits mentioned explicitly, is given in Table~\ref{Table:types}.

As the nonsplit types give rise to fields $K$ having
the minimal group $G$ as its absolute Galois group,
one is inevitably led to the following conjecture.
\begin{conjecture}
\label{conj:infmany}
There are infinitely many imaginary quadratic fields $K$
for which the absolute abelian Galois group is isomorphic to
$$
G=\Zhat^2 \times \prod_{n\ge1} \Zee/n\Zee.
$$
\end{conjecture}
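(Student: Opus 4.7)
The plan is to combine Theorem~\ref{thm:minG} and Theorem~\ref{thm:criterion} to reduce the conjecture to the existence of infinitely many $K$ on which the maps $\phi_p$ are simultaneously injective for all $p\mid h_K$. I would first restrict attention to imaginary quadratic fields $K$ with $-D_K$ an odd prime, so that $h_K$ is automatically odd; this eliminates all concerns at the prime $2$ and bypasses Theorems~\ref{thm:2rank1} and \ref{thm:2rank2} entirely. For such $K$, it suffices by Theorem~\ref{thm:criterion} to produce infinitely many $K$ with $-D_K$ prime for which $\phi_p$ is injective for every odd $p\mid h_K$.

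Next, I would narrow the family further to $K$ whose class group $\Cl_K$ is cyclic of odd prime order $p$, so that Corollary~\ref{cor:rank3} is vacuous and we only need $\phi_p\ne 0$ for the single prime $p=h_K$. In this setting $\phi_p$ sends a generator $[\gotha]$ of $\Cl_K$ to the class of a generator $\alpha$ of $\gotha^p$ inside the two-dimensional $\FF_p$-vector space $\Ocal_p^*/T_p(\Ocal_p^*)^p$, and the sought nonvanishing amounts to requiring $\alpha$ to be nontrivial there. I would reformulate this condition via the auxiliary Kummer extension $K(\zeta_p,\alpha^{1/p})/K(\zeta_p)$ and attempt a Chebotarev-style density argument, as $K$ varies through a suitable infinite family with $h_K=p$ fixed, to show that only a proportion of order $p^{-2}$ of such $K$ fail the condition.

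The hard part will be making the density statement rigorous on two fronts. First, even the assertion that infinitely many imaginary quadratic $K$ have prime class number $p$, for any fixed $p$, is currently open; one would either invoke the Cohen-Lenstra heuristics as a working hypothesis or replace the condition by the weaker requirement that $\Cl_K$ has cyclic $\ell$-part for every small prime $\ell$, which holds for the vast majority of $K$ in the numerical range of Section~\ref{S:computations}. Second, and more seriously, the element $\alpha$ depends on $K$ in a subtle way, and there is no obvious Galois-theoretic model in which $\phi_p([\gotha])$ is uniformly distributed in $\Ocal_p^*/T_p(\Ocal_p^*)^p$; establishing such equidistribution, or even nonvanishing on an infinite subfamily, is the main analytic obstacle. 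The numerical evidence in Section~\ref{S:computations}, where roughly $97\%$ of the $2356$ fields with odd prime $h_K<100$ satisfy the nonsplit condition, is consistent with a heuristic failure rate of order $p^{-2}$ at each prime $p$, and modelling the failures as independent across distinct $p$ predicts a positive proportion $\prod_p(1-p^{-2})>0$ of imaginary quadratic $K$ with $A_K\cong G$, which is the natural route towards the conjecture.
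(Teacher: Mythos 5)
The statement you are addressing is Conjecture~\ref{conj:infmany}, and the paper does not prove it: it is stated as a conjecture, supported only by the numerical data of Tables~\ref{Table:types} and~\ref{Table:fractions} and by the heuristic that the splitting fraction over $\Cl_K[p]$ is about $1/p$. The authors say explicitly that even the infinitude of imaginary quadratic fields with prime class number is an open problem, and that no theorem governs the split/nonsplit distribution. Your proposal correctly retraces the reduction the paper itself uses to gather evidence --- Theorem~\ref{thm:minG} combined with Theorem~\ref{thm:criterion}, restricted to $-D_K$ prime so that the prime $2$ causes no trouble --- and you are candid that the two remaining steps (infinitely many $K$ with $h_K=p$, and nonvanishing of $\phi_p$ on an infinite subfamily) are open. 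That candour is appropriate, but it means the proposal is not a proof: it is a restatement of the conjecture together with essentially the same heuristics the paper already gives. Invoking Cohen--Lenstra ``as a working hypothesis'' or a Chebotarev-style argument ``as $K$ varies'' does not close either gap; Chebotarev applies to primes varying in a fixed extension, not to a varying family of base fields, and no equidistribution statement for $\phi_p([\gotha])$ as $K$ varies is currently available.

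One concrete quantitative error: your model predicts a failure (split) rate of order $p^{-2}$, on the grounds that $\phi_p$ of a generator should be uniform in the two-dimensional $\FF_p$-space $\Ocal_p^*/T_p(\Ocal_p^*)^p$. The paper's data contradict this: Table~\ref{Table:fractions} shows $p\cdot f_p\approx 1$, i.e.\ a split fraction of about $1/p$, not $1/p^{2}$, and the paper states this $1/p$ heuristic explicitly after Table~\ref{Table:types}. The discrepancy is explained by the fact that the generator $\alpha$ of $\gotha^p$ is not a free parameter: its norm is $\pm N\gotha^{p}$, a $p$-th power up to units, so the image of $\phi_p$ is confined to (essentially) a one-dimensional subspace of the target, and the naive uniform model on $\FF_p^{2}$ is wrong. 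Consequently your concluding product $\prod_p(1-p^{-2})$ is not the prediction supported by the paper's computations; any heuristic or rigorous argument along your lines would have to be built on the corrected one-dimensional model, and would still leave both open problems untouched.
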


The numerical evidence may be strong, but
we do not even have a theorem that there are infinitely many
prime numbers that occur as the class number of an imaginary quadratic field.
And even if we had, we have no theorem telling us what the distribution
between split and nonsplit will be.

\begin{table}[ht]
\begin{center}
\begin{tabular}{rrrrcl}
\toprule
$p$ &\multicolumn{2}{c}{$\#\{K : h_K = p\}$}& \multicolumn{2}{c}{\#Nonsplit} & \qquad $-D_K$ for split $K$ \\
\midrule
 2 & \quad\qquad 18 && \qquad\ 8 && $     35,     51,     91,   115,   123,  187, 235, 267, 403, 427$\\
 3 &             16 &&        13 && $    107,    331,    643                                        $\\
 5 &             25 &&        19 && $    347,    443,    739,  1051,  1123, 1723                    $\\
 7 &             31 &&        27 && $    859,   1163,   2707,  5107                                 $\\
11 &             41 &&        36 && $   9403,   5179,   2027, 10987, 13267                          $\\ 
13 &             37 &&        34 && $   1667,   2963,  11923                                        $\\
17 &             45 &&        41 && $    383,   8539,  16699, 25243                                 $\\
19 &             47 &&        43 && $   4327,  17299,  17539, 17683                                 $\\ 
23 &             68 &&        65 && $   2411,   9587,  21163                                        $\\
29 &             83 &&        80 && $  47563,  74827, 110947                                        $\\
31 &             73 &&        70 && $   9203,  12923,  46867                                        $\\ 
37 &             85 &&        83 && $  20011,  28283                                                $\\
41 &            109 &&       106 && $  14887,  21487,  96763                                        $\\
43 &            106 &&       105 && $  42683                                                        $\\ 
47 &            107 &&       107 && ---                                                              \\
53 &            114 &&       114 && ---                                                              \\
59 &            128 &&       126 && $ 125731, 166363                                                $\\ 
61 &            132 &&       131 && $ 101483                                                        $\\
67 &            120 &&       119 && $ 652723                                                        $\\
71 &            150 &&       150 && ---                                                              \\
73 &            119 &&       117 && $ 358747, 597403                                                $\\
79 &            175 &&       174 && $  64303                                                        $\\
83 &            150 &&       150 && ---                                                              \\
89 &            192 &&       189 && $  48779, 165587, 348883                                        $\\
97 &            185 &&       184 && $ 130051                                                        $\\
\bottomrule
\end{tabular}
\bigskip
\end{center}
\caption{Splitting types for fields $K$ with $h_K=p < 100$.
The second column gives the number of imaginary quadratic fields with class
number~$p$; the third column gives the number of such fields for which
the sequence~\eqref{eq:pushout} does not split; and the fourth column
gives $-D_K$ for the fields $K$ for which \eqref{eq:pushout} splits.}
\label{Table:types}
\end{table}

From Table~\ref{Table:types}, one easily gets the impression that among all $K$
with $h_K=p$, the fraction for which the sequence \eqref{eq:pushout} splits 
is about $1/p$.
In particular, assuming infinitely many imaginary quadratic fields to have 
prime class number, we would expect 100\% of these fields to have the minimal
absolute abelian Galois group $G$.

If we fix the class number $h_K=p$, the list of $K$ will be finite,
making it impossible to study the average distribution of the
splitting behavior over $\Cl_K[p]$.
For this reason, we computed the average splitting behavior over
$\Cl_K[p]$ for the set $S_p$ of imaginary quadratic fields $K$ for
which the class number has a \emph{single} factor~$p$.

\begin{table}[ht]
\begin{center}
\begin{tabular}{rrrccc}
\toprule
$p$ &\qquad$N_p$&& $B_p$ && $p\cdot f_p$ \\
\midrule
 3 &  300 && $10^7$ && 0.960 \\ 
 5 &  500 && $10^7$ && 0.930 \\ 
 7 &  700 && $10^7$ && 0.960 \\ 
11 & 1100 && $10^7$ && 0.990 \\ 
13 & 1300 && $10^7$ && 1.070 \\ 
17 & 1700 && $10^7$ && 0.920 \\ 
19 & 1900 && $10^7$ && 1.000 \\ 
23 & 2300 && $10^7$ && 1.030 \\ 
29 & 2900 && $10^6$ && 1.000 \\ 
31 & 3100 && $10^6$ && 0.970 \\ 
37 & 3700 && $10^6$ && 0.930 \\ 
41 & 4100 && $10^6$ && 1.060 \\ 
43 & 2150 && $10^6$ && 1.080 \\ 
47 &  470 && $10^7$ && 0.900 \\ 
53 &  530 && $10^5$ && 1.000 \\ 
59 &  590 && $10^6$ && 0.900 \\ 
61 & 1830 && $10^5$ && 0.933 \\ 
67 &  670 && $10^6$ && 0.900 \\ 
71 & 1000 && $10^5$ && 1.136 \\ 
73 & 3650 && $10^5$ && 0.900 \\ 
79 & 1399 && $10^7$ && 1.130 \\ 
83 & 1660 && $10^5$ && 1.000 \\ 
89 &  890 && $10^5$ && 1.100 \\ 
97 &  970 && $10^8$ && 1.100 \\ 
\bottomrule
\end{tabular}
\bigskip
\end{center}
\caption{Splitting fractions at $p$ for $h_K$ divisible by $p<100$.
For the given values of $p$, $N_p$, and $B_p$, we consider
the first $N_p$ imaginary quadratic fields $K$ with 
$|D_K|>B_p$ and whose class numbers are divisible by a single factor of $p$.
The fourth column gives the value of $p\cdot f_p$, where $f_p$ is 
the fraction of these fields for which the sequence \eqref{eq:pushout}
is split over $\Cl_K[p]$.}
\label{Table:fractions}
\end{table}

More precisely, Table~\ref{Table:fractions} lists, for the first $N_p$ imaginary
quadratic fields $K\in S_p$ of absolute discriminant $|D_K|> B_p$,
the fraction $f_p$ of $K$ for which the sequence \eqref{eq:pushout}
is split over $\Cl_K[p]$.
We started counting for absolute discriminants exceeding
$B_p$ to avoid the influence that using many very small discriminants 
may have on observing the asymptotic behavior.
Numerically, the values for $p\cdot f_p \approx 1$ in the table show that
the fraction $f_p$ is indeed close to $1/p$.

For the first three odd primes, we also
looked at the distribution of the splitting 
over the three kinds of local behavior in $K$ of the prime $p$
(split, inert or ramified)
and concluded that,
at least numerically, there is no clearly visible influence;
see Table~\ref{Table:local}.

\begin{table}[ht]
\begin{center}
\begin{tabular}{ccccccccccccc}
\toprule
$p$ && $N_p$ &&    $B_p$ && $p\cdot f_p$ && Split && Inert && Ramified \\
\midrule
3 && 300 && $10^7$ && 0.960 && 0.925 && 0.947 && 1.025 \\
5 && 500 && $10^7$ && 0.930 && 0.833 && 0.990 && 1.022 \\
7 && 700 && $10^7$ && 0.960 && 0.972 && 0.963 && 0.897 \\
\bottomrule
\end{tabular}
\bigskip
\end{center}
\caption{Splitting fractions at $p$ according to local behavior at $p$.
The first four columns are as in Table~\ref{Table:fractions}.
The remaining columns give the values of $p$ times the quantity
analogous to $f_p$, where we further limit our attention to fields
in which $p$ has the prescribed splitting behavior.}
\label{Table:local}
\end{table}

We further did a few computations that confirmed the natural hypothesis
that the splitting behaviors at different primes $p$ and $q$
that both divide the class number once
are independent of each other.

\section*{Acknowledgement}
We thank Georges Gras for spotting an inaccuracy in part (2) of
Theorem~\ref{thm:2rank1}, and for pointing out related 
results in his textbook on class field theory~\cite{\Gras}.

\bibliographystyle{hplaindoi}
\bibliography{AngelakisEtAl}
\end{document}